
\documentclass[a4paper, 11pt, reqno, twoside]{amsart}
\numberwithin{equation}{section}

\usepackage[T1]{fontenc}
\usepackage[latin1]{inputenc}

\usepackage{graphicx}
\usepackage{epstopdf}

\usepackage{color}


\usepackage{amssymb,amscd,amsmath}

\newcommand{\realspan}{span_{{\mathbb R}}\,}
\newcommand{\compspan}{span_{{\mathbb C}}\,}

\newcommand{\LL}{\mathcal{L}}

\newcommand{\bigOh}{\mathcal O}

\newcommand{\R}{\mathbb R}
\newcommand{\s}{\mathbb S}

\newcommand{\Z}{\mathbb Z}
\newcommand{\Schwartz}{\mathcal{S}}
\newcommand{\Lis}{\mathcal{L}_\text{is}}
\newcommand{\BL}{\mathcal{L}}

\newcommand{\Sn}{\mathcal{S}_N}

\DeclareMathOperator{\Diff}{D}
\DeclareMathOperator{\Lop}{L}
\DeclareMathOperator{\Nop}{N}
\DeclareMathOperator{\Fop}{F}
\DeclareMathOperator{\Piop}{\Pi}
\DeclareMathOperator{\F}{{\mathcal F}}
\DeclareMathOperator{\id}{id}

\DeclareMathOperator*{\range}{ran}
\DeclareMathOperator{\spn}{span}

\newtheorem{theorem}{Theorem}[section]
\newtheorem{lemma}[theorem]{Lemma}
\newtheorem{corollary}[theorem]{Corollary}
\newtheorem{proposition}[theorem]{Proposition}

\newtheorem*{acknowledgments}{Acknowledgments}

\theoremstyle{definition}
\newtheorem{remark}[theorem]{Remark}


\title[Global bifurcation for the Whitham equation]{\small Global bifurcation for the Whitham equation}

\author{Mats Ehrnstr\"om}
\address{Department of Mathematical Sciences, Norwegian University of Science and Technology, 7491 Trondheim, Norway}
\address{Institut f\"ur Angewandte Mathematik, Leibniz Universit\"at Hannover, Welfengarten 1, 30167 Hannover, Germany}
\email{mats.ehrnstrom@math.ntnu.no}
\author{Henrik Kalisch}
\address{Department of Mathematics, University of Bergen, Postbox 7800, 5020 Bergen, Norway}
\email{henrik.kalisch@math.uib.no}

\begin{document}

\maketitle

\begin{abstract}
We prove the existence of a global bifurcation branch of $2\pi$-periodic, 
smooth, traveling-wave solutions of the Whitham equation. 
It is shown that any subset of solutions in the global branch contains 
a sequence which converges uniformly to some solution of 
H\"older class $C^{\alpha}$, $\alpha < \frac{1}{2}$. 
Bifurcation formulas are given, as well as some properties along the 
global bifurcation branch. In addition, a spectral scheme for 
computing approximations to those waves is put forward, 
and several numerical results along the global bifurcation branch are presented, including the presence of a turning point and a `highest', cusped wave. 
Both analytic and numerical results 
are compared to traveling-wave solutions of the KdV equation.
\end{abstract}

\section{Introduction}
\noindent
The Whitham equation,
\begin{equation}\label{eq:whit}
\eta_{t} + {\textstyle \frac{3}{2}\frac{c_0}{h_0}} \, \eta \, \eta_{x} 
         + K_{h_0} * \eta_{x} = 0,
\end{equation}
combines a generic nonlinear quadratic term with the exact 
linear dispersion relation for surface water waves on finite depth. 
Here, the kernel $K_{h_0} := \F^{-1} \left( c_{h_0} \right)$ is the inverse 
Fourier transform of the phase speed
\begin{equation}\label{eq:disp}
c_{h_0}(\xi) := \sqrt{{\textstyle \frac{g \tanh{(\xi h_0)}}{\xi} }}
\end{equation}
for the linearized water-wave problem; 
the constants $g$, $h_0$ and $c_0 := \sqrt{g h_0}$ denote, respectively, 
the gravitational constant of acceleration, the undisturbed water depth, 
and the limiting long-wave speed. 
The function $\eta(t,x)$ describes the deflection of the fluid surface 
from the rest position at a point $x$ at time $t$ \cite{MR1699025}. 
The Whitham equation was introduced by Whitham (see \cite{0163.21104}) 
as an alternative to the Korteweg--de Vries (KdV) equation, 
which describes the evolution of predominantly uni-directional, 
small-amplitude surface  waves in shallow water \cite{MR2196497,MR795808,MR1780702}. 
The KdV equation appears from \eqref{eq:whit} if one replaces 
$c_{h_0}(\xi)$ with its second-order approximation at $\xi = 0$, 
\begin{equation}\label{eq:dispKdV}
c_0 - {\textstyle \frac{1}{6}} c_0 h_0^2 \xi^2,
\end{equation}
and this explains the main motivation behind the Whitham equation. 
In particular, since $\xi=\frac{2 \pi}{\lambda}$ may be interpreted 
as a wave number, $\lambda$ being a typical wavelength, 
one sees from \eqref{eq:dispKdV} that the KdV equation 
is a poor approximation to \eqref{eq:disp} for large wave numbers. 

The Whitham equation \eqref{eq:whit} with the kernel \eqref{eq:disp} 
has some very interesting mathematical features. 
First of all, it is generically nonlocal, making pointwise estimates difficult. 
Moreover, $c_{h_0}(\xi)$ has slow decay, and the kernel $K_{h_0}$ is singular 
(it blows up at $x = 0$). This makes the Whitham equation in some important 
respects different from many other equations of the form \eqref{eq:whit}. 
One example is the possibility of singularities in solutions: 
the Whitham equation features (phenomenological) wave-breaking \cite{0802.35002} 
--- i.e. bounded solutions with unbounded derivatives --- 
and is conjectured to admit solutions with cusps \cite{0163.21104}. 

In this paper we consider steady solutions of the Whitham equation, i.e., traveling-wave solutions characterized by a constant speed and shape. The existence of smooth, small-amplitude, periodic traveling-wave solutions 
was established in \cite{EK08}, and their properties numerically investigated. 
The numerical calculations pursued in that investigation indicate 
a global branch of solutions approaching a `highest', cusped, wave --- 
as well as a sequence of periodic waves converging to a solitary wave. 
An analytic proof of the latter fact is given in \cite{EGW11}, for small waves, using a variational approach developed mainly in~\cite{Buffoni04a,GrovesWahlen10a}. The aim of this note is: 
(i) to place the Whitham equation in a general 
functional-analytic framework, in which the bifurcation theory for that 
and many related equations is quite natural; 
(ii) to extend the main local bifurcation branch found in \cite{EK08} to a global one; 
and (iii) to provide a spectral scheme for efficient calculation of Whitham waves. 
In addition, we prove some convergence results along the global branch of solutions, 
and several numerical calculations are given, complementing the more abstract exact theory. In particular, numerical evidence is given for a turning point along the global bifurcation branch, as well as additional calculations supporting the existence of a cusped wave at the end of the bifurcation branch.

Judging from the forms of the KdV and Whitham equations, 
one would expect several similarities for small waves of small wavelength 
(one such convergence result is given in \cite{EGW11}). 
To illustrate this, we perform a comparative local bifurcation 
analysis for the two equations, analytically and numerically. 
Although the solutions arise in the same manner from a 
curve of vanishing solutions, they develop differently along the corresponding 
bifurcation branches. We show that the Whitham solutions are all smooth and subcritical 
(their normalized wave speed is uniformly bounded away from unity), 
and that they converge uniformly to a wave of $C^\alpha$-regularity, 
$\alpha < \frac{1}{2}$. The intricate form of the Whitham kernel has yet 
hindered us from proving that the limiting wave is cusped and non-trivial. 
To see the difficulty, note that the Whitham kernel $K_{h_0}$ 
is not easily seen to be positive definite (cf., e.g., \cite{MR1823914}). 

This is the disposition of the paper: in Section~\ref{sec:prel} 
we normalize the equation for traveling wave-solutions and briefly discuss 
Fourier multipliers on H\"older spaces. Section~\ref{sec:local} contains 
a functional-analytic formulation of the problem, and local bifurcation theory 
for the Whitham and the KdV equations. Section~\ref{sec:global} 
is the main analytic section, in which we state and prove the global results. 
The sections~\ref{sec:spectral} and \ref{sec:results} are devoted to 
the numerical analysis. In Section~\ref{sec:spectral} we describe 
a spectral scheme for the Whitham equation, and use it to calculate wave profiles. 
Finally, Section~\ref{sec:results} takes a look at the shape of the bifurcation
branches, and provides a numerical comparison 
of the KdV and Whitham waves. 

\section{Some preliminaries}
\label{sec:prel}
Throughout the paper, the operator $\F$ will denote the extension to the space of 
tempered distributions $\Schwartz^\prime(\R)$ of the Fourier transform 
\[
\F(f)(\xi) := \int \hat f(x) \exp(-ix\xi)\,dx 
\]
on the Schwartz space $\Schwartz(\R)$, 
with inverse $\F^{-1}(f)(x) := \frac{1}{2\pi} \int \hat f(\xi) \exp(ix\xi)\,d\xi$. 

\subsection*{\sc Traveling waves.} 
By considering steady solutions with wave speed $c > 0$,
\[
\varphi(x-ct) := \eta(t,x), 
\]
the left-hand side of~\eqref{eq:whit} may be integrated to 
$-c \varphi + \frac{3 c_0}{4 h_0} \varphi^2 + K_{h_0} \ast \varphi$. 
In the following, we keep with the convention in denoting the 
steady variable $x -ct$ again by $x$. 
The scalings $\frac{3}{4 h_0} \varphi \mapsto \varphi$ and $x \mapsto h_0 x$ 
then yield the normalized equation
\begin{equation}\label{eq:canonical}
-\mu \varphi + \varphi^2 + K \ast \varphi = 0, 
\end{equation}
where $\mu := c/c_0$ is the non-dimensional wave speed, 
and we have set the constant of integration equal to zero
(see Remark~\ref{rem:galilean} below for a discussion of this). 
The symbol of the convolution operator in~\eqref{eq:canonical} is given by
\begin{equation}\label{eq:K1}
\F \left( K \right)(\xi) := {\textstyle \left(  \frac{\tanh\left( \xi \right)}{\xi} \right)^{1/2}}
\end{equation} 
for the Whitham equation. 
The corresponding Fourier multiplier for the KdV equation has the form
$1 - \frac{1}{6} \xi^2$, and the equation reads 
\begin{equation}\label{eq:kdv}
(1 - \mu) \varphi + \varphi^2 + \frac{1}{6} \varphi^{\prime\prime} = 0. 
\end{equation} 
Any solution of~\eqref{eq:canonical} with the kernel given by~\eqref{eq:K1} 
will be called a \emph{Whitham solution}, and any solution of~\eqref{eq:kdv} will be called a \emph{KdV solution}. 
We shall be dealing only with $2\pi$-periodic solutions.

\begin{remark}\label{rem:galilean}
The normalization $B = 0$ in
\[
-\mu \varphi + \varphi^2 + K \ast \varphi = B,
\]
cf.~\eqref{eq:canonical}, is a choice of convenience: both the Whitham and KdV equations are invariant under the Galilean transformation 
\[
\varphi \mapsto \varphi + \gamma, \qquad \mu \mapsto \mu + 2 \gamma, \qquad B \mapsto B + \gamma(1 - \mu - \gamma),
\]
for any $\gamma \in \R$. As an alternative to letting $B=0$ one may therefore consider waves of vanishing mean amplitude. 
\end{remark}

\subsection*{\sc Fourier multipliers on H\"older spaces}
The analysis given in this paper relies on certain properties of Fourier multiplier
operators given by classical symbols, and we briefly recall some of those. 
A smooth, real-valued function $a$ on $\R$ is said to be in the symbol class $S^m$ if
for some constant $c > 0$ and any non-negative integer $k$, the estimate
\[
| \partial_\xi^k a(\xi)|  \leq c (1+|\xi|)^{m-k}
\]
holds.
Analyzing the action of such operators on the Sobolev spaces $H^s$ is straightforward owing
to Parseval's formula. However, the analysis is somewhat more subtle if 
classes of continuously differentiable functions or other Banach spaces are considered. 
Although there exist natural scales of  function spaces for this class of operators (cf. \cite{MR781540}) we have chosen to work with the classical spaces $C^{s}_{\text{even}}(\s)$, $s > 0$, consisting of \(2\pi\)-periodic even functions that are $\lfloor s \rfloor$ times continuously differentiable with the $\lfloor s \rfloor$th derivative bounded 
and continuous, and H\"older continuous with H\"older exponent 
$s - \lfloor s \rfloor$ for $s \not\in \Z$. We then have the following lemma.

\begin{lemma}[\cite{MR1461211}]\label{lemma:besov}
Fix \(m \in \R\), $a \in S^m$, and let \(s \in \R_+\setminus \Z\) be a non-integral non-negative real number. Then
\[
a(\Diff) \in \LL\left( C(\s)^{s+m}, C(\s)^{s} \right)
\]
is a bounded linear operator of order \(-m\).
\end{lemma}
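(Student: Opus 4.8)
The plan is to deduce the mapping property $a(\Diff)\in\LL(C(\s)^{s+m},C(\s)^{s})$ from a single scale-uniform multiplier estimate, working through a Littlewood--Paley description of the H\"older spaces. The key structural fact I would use is that, precisely because $s\notin\Z$, the space $C(\s)^{s}$ coincides (with equivalent norms) with the periodic Besov--Zygmund space $B^{s}_{\infty,\infty}(\s)$; on the domain side only the embedding $C(\s)^{s+m}\hookrightarrow B^{s+m}_{\infty,\infty}(\s)$ is needed. It therefore suffices to prove the purely Besov-level bound $\|a(\Diff)f\|_{B^{s}_{\infty,\infty}}\lesssim\|f\|_{B^{s+m}_{\infty,\infty}}$, valid for every real $s$, and then to sandwich it between these two identifications. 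Fixing a dyadic partition of unity $1=\sum_{j\ge0}\varphi_{j}$ on the frequency side, with $\varphi_{j}$ supported in $\{|\xi|\sim2^{j}\}$ for $j\ge1$, and setting $\Delta_{j}:=\varphi_{j}(\Diff)$, one has $\|g\|_{B^{s}_{\infty,\infty}}\approx\sup_{j\ge0}2^{js}\|\Delta_{j}g\|_{L^{\infty}(\s)}$, so the whole question reduces to estimating $2^{js}\|\Delta_{j}\,a(\Diff)f\|_{L^{\infty}}$ uniformly in $j$.

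First I would record the reduction. Since $\Delta_{j}$ and $a(\Diff)$ are Fourier multipliers they commute, and the support of $\varphi_{j}$ gives $\Delta_{j}\,a(\Diff)=\Delta_{j}\,a(\Diff)\,\widetilde\Delta_{j}$ with the fattened projector $\widetilde\Delta_{j}:=\Delta_{j-1}+\Delta_{j}+\Delta_{j+1}$. Granting the central estimate
\[
\bigl\|a(\Diff)\Delta_{j}\bigr\|_{\LL(L^{\infty}(\s),L^{\infty}(\s))}\lesssim2^{jm}\qquad(j\ge0),
\]
uniformly in $j$, one gets $2^{js}\|\Delta_{j}a(\Diff)f\|_{\infty}\lesssim2^{j(s+m)}\|\widetilde\Delta_{j}f\|_{\infty}\lesssim\|f\|_{B^{s+m}_{\infty,\infty}}$, and taking the supremum over $j$ closes the argument. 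Because the symbols occurring in practice --- in particular the Whitham symbol of~\eqref{eq:K1} --- are even, $a(\Diff)$ commutes with $x\mapsto-x$ and hence preserves even functions; this is the only point at which the evenness built into $C(\s)^{s}$ plays a role, the norm estimate itself being insensitive to it.

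The real work, and the step I expect to be the main obstacle, is the uniform dyadic bound together with its transference from $\R$ to $\s$; this is where the hypothesis $a\in S^{m}$ enters. On the line the localized symbol $\varphi_{j}a$ is supported in $\{|\xi|\sim2^{j}\}$ and obeys $|\partial_{\xi}^{k}(\varphi_{j}a)|\lesssim2^{j(m-k)}$ by Leibniz together with the defining estimates of $S^{m}$. Integrating by parts $N$ times in the oscillatory integral for $\mathcal K_{j}:=\F^{-1}(\varphi_{j}a)$ then yields $|\mathcal K_{j}(x)|\lesssim2^{j(m+1)}(1+2^{j}|x|)^{-N}$, and rescaling $y=2^{j}x$ with $N\ge2$ gives $\|\mathcal K_{j}\|_{L^{1}(\R)}\lesssim2^{jm}$. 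The delicate bookkeeping here is to track the scale $2^{-j}$ through the integration by parts so that the support measure ($\sim2^{j}$), the symbol size ($\sim2^{jm}$), and the decay recombine into a bound independent of $j$. For the passage to the circle I would periodize: the kernel of $a(\Diff)\Delta_{j}$ on $2\pi$-periodic functions is $\sum_{k\in\Z}\mathcal K_{j}(\cdot+2\pi k)$, whose $L^{1}(\s)$ norm is controlled by $\|\mathcal K_{j}\|_{L^{1}(\R)}$, and convolution by an $L^{1}$ kernel is bounded on $L^{\infty}$ with norm equal to that $L^{1}$ norm --- which is exactly the claimed $2^{jm}$.

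Finally, a standard reorganization makes the logical skeleton transparent and could replace the direct computation: factor $a(\Diff)=b(\Diff)\circ(1+\xi^{2})^{m/2}(\Diff)$ with $b:=a\,(1+\xi^{2})^{-m/2}\in S^{0}$. The Bessel-potential factor $(1+\xi^{2})^{m/2}(\Diff)$ is an isomorphism $C(\s)^{s+m}\to C(\s)^{s}$, so the claim collapses to the boundedness of the order-zero operator $b(\Diff)$ on $C(\s)^{s}=B^{s}_{\infty,\infty}(\s)$, which is a Mikhlin--H\"ormander multiplier theorem on these spaces. That theorem is itself established by the same Littlewood--Paley estimate as above, so the two approaches share their one genuine difficulty; I would present whichever is shorter in the surrounding development.
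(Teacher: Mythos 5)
Your argument is correct, but note that the paper itself gives no proof of this lemma: it is quoted directly from the literature (the citation \cite{MR1461211}), so there is nothing internal to compare against. Your Littlewood--Paley reconstruction --- identifying $C^{s}$ with $B^{s}_{\infty,\infty}$ for $s\notin\Z$, proving the uniform dyadic bound $\|a(\Diff)\Delta_j\|_{\LL(L^\infty,L^\infty)}\lesssim 2^{jm}$ by kernel estimates, and transferring to $\s$ by periodization --- is precisely the standard argument underlying that citation, and it also explains why the hypothesis $s\notin\Z$ is needed, in agreement with the paper's subsequent remark that integral $s$ would require replacing $C^{s}$ by Zygmund spaces.
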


\begin{remark}
If one extends instead the family of H\"older spaces \(C^s\), for \(s \not\in \Z\), with the so-called Zygmund spaces for integral values of \(s\), one attains a scale of spaces for which Lemma~\ref{lemma:besov} is valid also for integral values  of \(s\)  (for more information on the relations between these different function spaces, see \cite{MR781540}). The introduction of Zygmund spaces is, however, not necessary to obtain the results in this paper.
\end{remark}


\section{Comparative local bifurcation theory}
\label{sec:local}
From now on, let \(\frac{1}{2} < \alpha  < 1\). We consider $C^{\alpha}_{\text{even}}(\s)$, the space of even and 
$\alpha$-H{\"o}lder continuous real-valued functions on the unit circle $\s$. 
The Whitham equation contains a generic nonlocal smoothing operator in the form 
of the Fourier multiplier $(\tanh(\xi)/\xi)^{1/2}$, 
whereas the corresponding KdV multiplier $1-\frac{1}{6}\xi^2$ 
is a second order differential operator. This difference may be easily overcome: rewriting the KdV equation in non-local, smoothing, form enables a similar treatment of the two equations. To illustrate how the analysis used for the Whitham equation can be applied to a larger class of equations, a comparative local bifurcation analysis is performed for the Whitham and the KdV equation (the KdV equation being a cardinal example of dispersive evolution equations and the Whitham equation's closest kin).    

\begin{theorem}[Functional-analytic formulation]
\label{thm:local}
Fix $\alpha > \frac{1}{2}$ and let $\mu > 0$.  
The solutions in $C^\alpha_{\rm{even}}(\s)$ of the Whitham equation \eqref{eq:canonical}, and those of the KdV equation \eqref{eq:kdv}, coincide with the kernel of an analytic operator 
$\Fop \colon C^\alpha_{\rm{even}}(\s) \times \R_{>0} \to C^\alpha_{\rm{even}}(\s)$ 
given by
\[
\Fop(\varphi,\mu) := \mu \varphi - \Lop({\mu})\varphi + \Nop(\varphi,{\mu}),
\]
where $\Lop(\mu)$ is bounded, linear and compact, 
and 
\[
\Nop_{\text{\tiny W}} (\varphi,{\mu}) = -\varphi^2, \qquad \Nop_{\text{\tiny KdV}}(\varphi,{\mu}) = - \Lop({\mu}) \varphi^2,
 \]
fulfills $\Diff_\varphi \Nop[0,\mu] = 0$. 
Thus $\Diff_\varphi \Fop[0,\mu]$ is Fredholm of index $0$. 
In the case of the Whitham equation the operators $\Lop$ and $\Nop$ are independent of $\mu$.
\end{theorem}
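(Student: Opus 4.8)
The plan is to exhibit, in each case, the smoothing operator $\Lop$ explicitly as a Fourier multiplier and then verify the assertions one after another: that $\Fop$ is well-defined and analytic from $C^\alpha_{\mathrm{even}}(\s)\times\R_{>0}$ into $C^\alpha_{\mathrm{even}}(\s)$, that $\Lop(\mu)$ is compact, that $\Diff_\varphi\Nop[0,\mu]=0$, and that $\Diff_\varphi\Fop[0,\mu]$ is Fredholm of index $0$.

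First I would identify $\Lop$. For the Whitham equation one rewrites \eqref{eq:canonical} as $\mu\varphi - K\ast\varphi - \varphi^2 = 0$, so that $\Lop\varphi := K\ast\varphi$ is the Fourier multiplier with symbol $(\tanh\xi/\xi)^{1/2}$ and $\Nop_{\text{\tiny W}}(\varphi,\mu)=-\varphi^2$; both are manifestly independent of $\mu$. For the KdV equation I would pass to a nonlocal form: multiplying \eqref{eq:kdv} by $6$ and rearranging gives $(6\mu-\partial_x^2)\varphi = 6(\varphi+\varphi^2)$, and since $6\mu-\partial_x^2$ is invertible on $2\pi$-periodic functions (its symbol $6\mu+\xi^2$ is bounded below by $6\mu>0$) this is equivalent, after multiplying by $\mu$ and applying $(6\mu-\partial_x^2)^{-1}$, to $\mu\varphi - \Lop(\mu)\varphi - \Lop(\mu)\varphi^2 = 0$, where $\Lop(\mu) = 6\mu(6\mu-\partial_x^2)^{-1}$ is the multiplier with symbol $6\mu/(6\mu+\xi^2)$ and $\Nop_{\text{\tiny KdV}}(\varphi,\mu)=-\Lop(\mu)\varphi^2$. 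In both cases $\Fop(\varphi,\mu)=0$ is, on the Fourier side, literally the original equation, so the solution sets coincide; since $\Lop$ is smoothing, any $\varphi\in C^\alpha$ with $\Fop(\varphi,\mu)=0$ is automatically smooth by bootstrapping, which is what lets one read off $C^\alpha$ solutions of the differential equation \eqref{eq:kdv} from the kernel of $\Fop$.

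Next I would settle the mapping and compactness properties via Lemma~\ref{lemma:besov}. The Whitham symbol lies in $S^{-1/2}$ (it is smooth and even, equals $1$ at $\xi=0$, and decays like $|\xi|^{-1/2}$ together with all its derivatives, the $\tanh$ corrections being exponentially small), while the KdV symbol $6\mu/(6\mu+\xi^2)$ lies in $S^{-2}$. Hence $\Lop\in\LL(C^{\alpha},C^{\alpha+1/2})$ respectively $\LL(C^{\alpha},C^{\alpha+2})$, and since $C^{\beta}(\s)\hookrightarrow C^{\alpha}(\s)$ is compact for $\beta>\alpha$ by the Arzel\`a--Ascoli theorem, $\Lop$ is compact on $C^\alpha_{\mathrm{even}}(\s)$. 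The quadratic term is controlled by the Banach-algebra property of $C^\alpha$, so $\varphi\mapsto\varphi^2$ and hence $\Nop$ map $C^\alpha$ into itself; evenness and real-valuedness are preserved because the symbols are even and real and products of even functions are even. Thus $\Fop$ maps into $C^\alpha_{\mathrm{even}}(\s)$. For analyticity I would argue termwise: $\mu\varphi$ is bilinear and $\varphi\mapsto-\varphi^2$ is a bounded homogeneous polynomial of degree two, hence both are analytic, and composing the latter with $\Lop(\mu)$ preserves analyticity once $\mu\mapsto\Lop(\mu)$ is analytic. This is immediate for Whitham ($\Lop$ constant in $\mu$); for KdV I would expand $6\mu/(6\mu+\xi^2)$ in powers of $\mu-\mu_0$ about any $\mu_0>0$, check that the coefficients are symbols in $S^{-2}$ with seminorms summable in the power-series sense, and let Lemma~\ref{lemma:besov} upgrade this to an operator-norm-convergent expansion of $\Lop(\mu)$ (equivalently, $\mu\mapsto(6\mu-\partial_x^2)^{-1}$ is a resolvent, analytic off the spectrum). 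Finally, $\Diff_\varphi(\varphi^2)|_{\varphi=0}=0$ gives $\Diff_\varphi\Nop[0,\mu]=0$, so $\Diff_\varphi\Fop[0,\mu]=\mu\,\id-\Lop(\mu)=\mu\bigl(\id-\mu^{-1}\Lop(\mu)\bigr)$; as $\mu\,\id$ is an isomorphism for $\mu>0$ and $\Lop(\mu)$ is compact, the Riesz--Schauder theorem yields that this is a compact perturbation of an invertible operator, hence Fredholm of index $0$.

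I expect the main obstacle to be the KdV side: pinning down the correct smoothing reformulation with symbol $6\mu/(6\mu+\xi^2)$, together with the regularity bootstrap that makes ``$C^\alpha$ solutions of \eqref{eq:kdv}'' meaningful, and verifying operator-norm analyticity of $\mu\mapsto\Lop(\mu)$. By contrast, the Whitham case is essentially immediate once its symbol is placed in $S^{-1/2}$, and the compactness and Fredholm steps are routine consequences of Lemma~\ref{lemma:besov} and the compact H\"older embedding.
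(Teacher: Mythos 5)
Your proposal is correct and follows essentially the same route as the paper: the same Fourier-multiplier identification (your $\Lop(\mu)=6\mu(6\mu-\partial_x^2)^{-1}$ is literally the paper's $(1-\frac{1}{6\mu}\partial_x^2)^{-1}$), the same use of Lemma~\ref{lemma:besov} with symbols in $S^{-1/2}$ and $S^{-2}$ together with the compact H\"older embedding, and the same compact-perturbation argument giving Fredholm index $0$. The only differences are presentational: you spell out the operator-norm analyticity of $\mu\mapsto\Lop(\mu)$ and the bootstrap equivalence of the solution sets, points the paper treats more briefly.
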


\begin{proof}
We put the KdV equation in the form
\[
\mu \varphi =  \left( 1 - {\textstyle\frac{1}{6\mu}} \partial_x^2 \right)^{-1} 
   \left( \varphi +  \varphi^2 \right), 
\]
where $\left( 1- \tau^2 \partial_x^2 \right)^{-1}  
       f := \frac{1}{2\tau} \exp\left(-\frac{|\cdot|}{\tau}\right) \ast f$ 
is the inverse of $1 - \tau^2 \partial_x^2$ on $\Schwartz^\prime(\R)$, 
the Schwartz space of tempered distributions on $\R$. 
Define
\begin{equation}\label{eq:Lops}
\Lop_\text{\tiny W} := K \ast \quad\text{ and }\quad \Lop_{\text{\tiny KdV}} 
                  :=  \left(1-{\textstyle \frac{1}{6\mu}} \partial_x^2\right)^{-1}.
\end{equation}
For functions $f \in C^\alpha_{\text{even}}(\s)$ with $\alpha > 1/2$ one has 
\[
f(x) \equiv \sum_{k \geq 0} a_k \cos(kx) \quad\text{ and }\quad  \sum_{k \geq 0} |a_k| < \infty,
\]  
meaning that $C_{\text{even}}^{\alpha}(\s)$ is a subalgebra of the Wiener algebra 
of $2\pi$-periodic functions with absolutely converging Fourier series \cite{MR2039503}. 
For such functions rudimentary calculations reduce \eqref{eq:Lops} to the intuitive formulas
\begin{align}
 \Lop_\text{\tiny W} f (x) &:= \sum_{k \geq 0} a_k 
                         \left( {\textstyle \frac{\tanh(k)}{k}} \right)^{1/2} 
                                   \cos(kx),\label{eq:lop_whitham}\\
\Lop_\text{\tiny KdV}(\mu)  f (x) &:= \sum_{k\geq 0} \frac{a_k}{1 + {\textstyle \frac{1}{6\mu}} k^2} 
                          \cos(kx),\label{eq:lop_kdv}
\end{align}
where for the Whitham equation one must use the fact that 
$K \in L^1(\R)$ (for details on $K$ and its symbol, see \cite{EK08}). 
The Fourier multiplier symbols in \eqref{eq:lop_whitham} and \eqref{eq:lop_kdv} belong to the symbol classes $S^{-1/2}(\R)$ 
and $S^{-2}(\R)$, and \eqref{eq:lop_whitham} and \eqref{eq:lop_kdv} 
are therefore bounded linear operators $C^\alpha_\text{even}(\s) \to C^{\alpha+s}_\text{even}(\s)$ 
for $s= 1/2$ and $s = 2$, respectively, see Lemma~\ref{lemma:besov}. 
Each of them is also invertible with bounded linear inverse 
$\Lop^{-1}\colon C^{\alpha+s}_\text{even}(\s) \to C^{\alpha}_\text{even}(\s)$ 
for the same values of $s$.  Due to the compactness of the embedding 
$C^\beta_\text{even}(\s) \hookrightarrow C^\alpha_\text{even}(\s)$, $\beta > \alpha$, 
both operators are compact on $C^\alpha_\text{even}(\s)$. 
We may thus define mappings 
$\Fop_\text{\tiny W}$ and $\Fop_\text{\tiny KdV} \colon C^\alpha_\text{even}(\s) \times \R_{>0} \to C^{\alpha}_\text{even}(\s)$ by
\begin{align}
\Fop_\text{\tiny W}(\varphi,\mu) 
&:= \mu \varphi -  \Lop_\text{\tiny W} \varphi -  \varphi^2 ,\label{eq:fop_whitham}\\ 
\Fop_\text{\tiny KdV}(\varphi,\mu) 
&:= \mu \varphi -  \Lop_\text{\tiny KdV}(\mu) \varphi -    \Lop_\text{\tiny KdV}(\mu) \varphi^2.
\label{eq:fop_kdv}
\end{align}
In both cases $F$ is analytic in both its arguments 
($\Lop_\text{\tiny KdV}(\mu)$ is the inverse of $1 - \frac{1}{6\mu} \partial_x^2$, which is analytic in $\mu$). 
We also have $F(0,\mu) = 0$, and the linearization $\Diff_\varphi \Fop[0,\mu] =  \mu \id - \Lop$ 
is Fredholm of index $0$ (it is a continuous perturbation of the identity by a compact operator). 
\end{proof}


\subsection*{\sc Local bifurcation}

\begin{corollary}[Local bifurcation, Whitham]
\label{cor:local_whit}
$ $\\[6pt]
{\bf (i) Subcritical bifurcation.} For each integer $k \geq 1$, there exist $\mu_k := \left({\tanh(k)}/{k}\right)^{1/2}$ 
and a local, analytic curve 
\[
\varepsilon \mapsto (\varphi(\varepsilon),\mu(\varepsilon)) \in C^\alpha_{\rm{even}}(\s) \times (0,1) 
\]
of nontrivial $2\pi/k$-periodic Whitham solutions with 
$\Diff_\varepsilon \varphi(0) = \cos(kx)$ that bifurcates from the 
trivial solution curve $\mu \mapsto (0,\mu)$ at $(\varphi(0),\mu(0)) = (0,\mu_k)$. 
In a neighborhood of the bifurcation point $(0,\mu_k)$ 
these are all nontrivial solutions of $\Fop_{\tiny{\rm W}}(\varphi,\mu) = 0$ 
in $C^\alpha_{\rm{even}}(\s) \times (0,1)$, 
and there are no other bifurcation points $\mu > 0$, $\mu \neq 1$, for solutions in $C^\alpha_{\rm{even}} (\s)$.\\[6pt] 
{\bf (ii) Transcritical bifurcation.}
At $\mu =1$ the trivial solution curve $\mu \mapsto (0,\mu)$ intersects the curve $\mu \mapsto (\mu -1, \mu)$ of constant solutions $\varphi = \mu - 1$; together these constitute all solutions in $C^\alpha_{\rm{even}} (\s)$ in a neighborhood of $(\varphi, \mu ) = (0,1)$.

\end{corollary}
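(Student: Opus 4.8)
The plan is to read off the spectrum of the linearization from its Fourier symbol and then invoke the Crandall--Rabinowitz local bifurcation theorem at each simple eigenvalue. By Theorem~\ref{thm:local} we have $\Diff_\varphi \Fop_{\text{\tiny W}}[0,\mu] = \mu \id - \Lop_{\text{\tiny W}}$, and since $\Lop_{\text{\tiny W}}$ acts diagonally in the cosine basis through the symbol $m(k) := (\tanh(k)/k)^{1/2}$ (with $m(0) = 1$), one has $\Diff_\varphi \Fop_{\text{\tiny W}}[0,\mu]\cos(kx) = (\mu - m(k))\cos(kx)$. First I would record that $k \mapsto \tanh(k)/k$ is strictly decreasing on $[0,\infty)$, so that the values $\mu_k = m(k)$ are pairwise distinct and satisfy $0 < \mu_k < 1 = \mu_0$ for every $k \geq 1$; this is exactly what makes each kernel one-dimensional and what forces the bifurcating speeds to be subcritical. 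Consequently the linearization is invertible precisely when $\mu \notin \{\mu_j : j \geq 0\}$, and by the implicit function theorem no bifurcation can occur off this set, so that only these candidate points remain to be analyzed.

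At each $\mu_k$, $k \geq 1$, I would check the Crandall--Rabinowitz hypotheses: the kernel is $\spn\{\cos(kx)\}$, the operator is Fredholm of index $0$ by Theorem~\ref{thm:local} so its range has codimension one, and the transversality condition holds because $\Diff_\mu \Diff_\varphi \Fop_{\text{\tiny W}}[0,\mu_k] = \id$ sends $\cos(kx)$ to itself, which is not in the range (a preimage would force $(\mu_k - m(k))\,b_k = 1$ in the $k$-th Fourier coefficient, while $\mu_k - m(k) = 0$). Since $\Fop_{\text{\tiny W}}$ is analytic, the analytic version of the theorem yields a local analytic curve $\varepsilon \mapsto (\varphi(\varepsilon),\mu(\varepsilon))$ with $\Diff_\varepsilon \varphi(0) = \cos(kx)$ and $\mu(0) = \mu_k$, exhausting the nontrivial solutions near $(0,\mu_k)$. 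For the $2\pi/k$-periodicity I would restrict $\Fop_{\text{\tiny W}}$ to the closed, $\Fop_{\text{\tiny W}}$-invariant subspace of $2\pi/k$-periodic even functions (a Fourier multiplier preserves Fourier support, and squaring preserves the period), in which $\cos(kx)$ is the fundamental mode and the same argument applies; the uniqueness clause then identifies this restricted curve with the one in the full space, so the bifurcating solutions are genuinely $2\pi/k$-periodic. Together with the previous paragraph this establishes (i).

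For (ii) I would treat the remaining candidate $\mu_0 = 1$, whose kernel is spanned by the constant function. The key observation is that the constant ansatz $\varphi \equiv \gamma$ reduces \eqref{eq:canonical} to $\gamma(\mu - 1 - \gamma) = 0$, so that besides the trivial family there is the curve $\mu \mapsto (\mu - 1, \mu)$ of constant solutions through $(0,1)$, whose tangent is the constant function. Transversality holds exactly as before (the constant is not in the range of $\id - \Lop_{\text{\tiny W}}$), so Crandall--Rabinowitz again produces a unique local curve of nontrivial solutions with this tangent; matching tangents identifies it with the constant curve, and the uniqueness clause then shows that the trivial family and the constant family together exhaust all solutions near $(0,1)$. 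The bifurcation is \emph{transcritical} because $\mu'(0) = 1 \neq 0$ along $\varphi = \mu - 1$.

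I expect the main obstacle to be the point $\mu = 1$: one must recognize the explicit constant branch and then lean on the full \emph{uniqueness} statement of the bifurcation theorem — not merely the existence of a bifurcating curve — to conclude that no further solutions appear. The elementary verification that $k \mapsto \tanh(k)/k$ is strictly monotone, needed to guarantee simple eigenvalues and hence applicability of Crandall--Rabinowitz at each $\mu_k$, is also essential, since a higher-dimensional kernel would demand a different, symmetry-based bifurcation argument.
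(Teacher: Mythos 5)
Your proposal is correct and follows essentially the same route as the paper: read off the simple eigenvalues $\mu_k$ from the cosine-basis symbol of $\Lop_{\text{\tiny W}}$, apply the analytic Crandall--Rabinowitz theorem at each one, exclude other bifurcation points via the implicit function theorem, obtain $2\pi/k$-periodicity by restriction to the invariant subspace, and identify the explicit constant family $\varphi = \mu - 1$ with the bifurcating curve at $\mu = 1$ through the uniqueness clause. You merely spell out details the paper leaves implicit (the strict monotonicity of $k \mapsto \tanh(k)/k$ and the transversality computation), which is fine.
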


\begin{proof}
The fact that $\Diff_\varphi \Fop_\text{\tiny W}[0,\mu] = \mu \id - \Lop_\text{\tiny W}$ 
is Fredholm of index $0$ and the formula~\eqref{eq:lop_whitham} show that $\mu_k$ 
are all simple eigenvalues of $\Lop_\text{\tiny W}$, 
and that no other eigenvalues $\mu > 0$ exist. 
The assertion then follows from the analytic version of the 
Crandall--Rabinowitz theorem for bifurcation from a simple eigenvalue 
\cite[Thm 8.4.1]{MR1956130}.
The fact that the solutions are $2\pi/k$-periodic can be seen by restricting attention to the subspaces 
$\{ \varphi \in C^\alpha_\text{even}(\s) \colon \varphi \text{ is } 2\pi/k\text{-periodic}\}$,
and, corresponding to the case $k = 0$, it is instantly verified that $\varphi = \mu- 1$ is a solution. By uniqueness, this family  must therefore constitute the local bifurcation curve at $\mu = 1$. Since for all other $\mu > 0$ the linearization $\Diff_\varphi \Fop_\text{\tiny W}[0,\mu]$ 
is Fredholm of index zero with trivial kernel, it is a consequence of the 
implicit function theorem that the vanishing solution is, locally, 
the unique solution in $C^\alpha_{\text{even}}(\s)$.  
\end{proof}

\begin{remark}
A more detailed proof of local bifurcation in a somewhat larger space, for $k = 1$, 
but with arbitrary wavelength, was given in \cite{EK08}. 
\end{remark}

For the KdV equation the change of variables
\begin{equation}\label{eq:KdV_change}
\varphi \mapsto k^2 \varphi(k\cdot), \qquad \mu \mapsto k^2(\mu-1) + 1,
\end{equation}  
means that the local bifurcation analysis can be reduced to the case $k=1$.

\begin{corollary}[Local bifurcation, KdV]
\label{cor:local_kdv}
Let $\mu^* := 5/6$. A local, analytic curve 
$\varepsilon \mapsto (\varphi(\varepsilon),\mu(\varepsilon)) 
               \in C^\alpha_{\rm even}(\s) \times \R_{>0}$ 
of nontrivial KdV solutions with $\Diff_\varepsilon \varphi(0) = \cos(x)$ 
bifurcates from the trivial solution curve at $(0,\mu^*)$. 
\end{corollary}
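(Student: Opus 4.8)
The plan is to mirror the proof of Corollary~\ref{cor:local_whit} and to verify the hypotheses of the analytic Crandall--Rabinowitz theorem \cite[Thm 8.4.1]{MR1956130} for the map $\Fop_\text{\tiny KdV}$ at the point $(0,\mu^*)$. By Theorem~\ref{thm:local} the operator $\Fop_\text{\tiny KdV}$ is analytic with $\Fop_\text{\tiny KdV}(0,\mu)=0$, and its partial linearization
\[
\Diff_\varphi \Fop_\text{\tiny KdV}[0,\mu] = \mu\,\id - \Lop_\text{\tiny KdV}(\mu)
\]
is Fredholm of index zero, being a compact perturbation of $\mu\,\id$. It therefore remains to (a) locate $\mu^*$ and exhibit a one-dimensional kernel spanned by $\cos(x)$, and (b) verify the Crandall--Rabinowitz transversality condition. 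Since we work at the single wave number $k=1$, the scaling \eqref{eq:KdV_change} mentioned above plays no role in the argument; it only generates the remaining bifurcation points afterwards.

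For (a), I would apply $1-\frac{1}{6\mu}\partial_x^2$ to the kernel equation $\Lop_\text{\tiny KdV}(\mu)\varphi=\mu\varphi$, which turns it into the linear differential relation $(1-\mu)\varphi + \frac{1}{6}\varphi''=0$. Testing against the Fourier basis $\cos(kx)$ of $C^\alpha_\text{even}(\s)$ gives the dispersion relation $\mu = 1 - \frac{k^2}{6}$, and the value $\mu^*=\frac{5}{6}$ corresponds exactly to $k=1$ among non-negative integers. Equivalently, from the symbol in \eqref{eq:lop_kdv} one reads off that the $k$th Fourier coefficient of $\Diff_\varphi \Fop_\text{\tiny KdV}[0,\mu^*]$ equals $\mu^* - (1+\frac{k^2}{6\mu^*})^{-1}$, which vanishes precisely at $k=1$, is negative at $k=0$, and is strictly positive (increasing towards $\mu^*$) for $k\geq 2$. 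Hence the kernel is exactly $\spn\{\cos(x)\}$.

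For (b), note that $\Diff_\varphi \Fop_\text{\tiny KdV}[0,\mu^*]$ is a Fourier multiplier, diagonal in the cosine basis, whose nonzero coefficients are bounded away from $0$; its range is therefore the closed subspace of functions carrying no $\cos(x)$-mode. Differentiating the symbol $\mu - (1+\frac{k^2}{6\mu})^{-1}$ in $\mu$ at $k=1$, $\mu=\mu^*$, I expect
\[
\Diff_\mu \Diff_\varphi \Fop_\text{\tiny KdV}[0,\mu^*]\cos(x) = {\textstyle \frac{5}{6}}\cos(x),
\]
a nonzero multiple of the kernel generator and thus not in the range. This is the transversality condition, and with it the theorem yields an analytic curve $\varepsilon\mapsto(\varphi(\varepsilon),\mu(\varepsilon))$ of nontrivial solutions with $\Diff_\varepsilon\varphi(0)=\cos(x)$, as claimed.

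The individual computations are routine; the one point that deserves genuine care — and the step I expect to be the main obstacle — is that $\Lop_\text{\tiny KdV}$ itself depends on $\mu$, so the eigenvalue problem is nonlinear in the parameter and one cannot simply invoke simplicity of an eigenvalue of a fixed operator, as in the Whitham case. The cleanest remedy is to carry the full $\mu$-dependence of the symbol through both the kernel computation and the differentiation in $\mu$, rather than freezing $\Lop_\text{\tiny KdV}$ at $\mu^*$; doing so is precisely what produces the value $\frac{5}{6}$ in the transversality check above.
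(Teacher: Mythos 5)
Your proposal is correct and takes essentially the same route as the paper: both identify the linearization as the Fourier multiplier $k \mapsto \mu - \frac{6\mu}{6\mu+k^2}$ (keeping the full $\mu$-dependence of $\Lop_\text{\tiny KdV}$), find that its kernel at $\mu^* = 5/6$ is exactly $\spn(\cos(x))$, verify transversality via $\Diff^2_{\varphi\mu}\Fop_\text{\tiny KdV}[0,\mu^*]\cos(x) = \frac{5}{6}\cos(x) \notin \range\left(\Diff_\varphi\Fop_\text{\tiny KdV}[0,\mu^*]\right)$, and conclude from the Crandall--Rabinowitz theorem together with Fredholmness of index $0$. Your intermediate reduction of the kernel equation to the ODE $(1-\mu)\varphi + \frac{1}{6}\varphi'' = 0$ is a cosmetic variant of the paper's direct symbol computation, and your closing caution about the $\mu$-dependence of $\Lop_\text{\tiny KdV}$ is precisely what the paper's formulas already account for.
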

\begin{proof}
We have that
\[
\Diff_\varphi \Fop_\text{\tiny KdV} [0,\mu] \varphi 
= \sum_{k\geq 0} a_k  \left( \mu - \frac{6\mu}{6\mu+k^2} \right) \cos(kx).
\]
Therefore, $\ker  \left( \Diff_\varphi \Fop_\text{\tiny KdV}[0,\mu^*] \right) =  \spn (\cos(x))$ 
and 
\[
\Diff^2_{\varphi \mu} \Fop_\text{\tiny KdV} [0,\mu^*]\cos(x) 
=  {\textstyle \frac{5}{6}} \cos(x) \not 
          \in \range \left( \Diff_\varphi \Fop_\text{\tiny KdV}[0,\mu^*] \right).
\]
As in the proof of Corollary~\ref{cor:local_whit}, 
since $\Diff_\varphi \Fop_\text{\tiny KdV}[0,\mu^*]$ is Fredholm of index $0$,
the assumptions of the Crandall--Rabinowitz theorem are fulfilled.
\end{proof}

\begin{remark}
The bifurcation points corresponding to $k=1$ are not the same for the Whitham 
and the KdV equation: we have $\mu_1 \approx 0.87$ but $\mu^* \approx 0.83$.
\end{remark}

\begin{remark}
In the original variables, before the transformation \eqref{eq:KdV_change}, the wave speed \(\mu\) of the KdV equation becomes 
negative for $k \geq 3$, a fact related to that KdV is a proper model 
for small water waves when the wave speed is positive and the wave number small.     
\end{remark}


\section{Global bifurcation for the Whitham equation} 
\label{sec:global}
As earlier, let \(\alpha > \frac{1}{2}\) denote a fixed H\"older exponent. Let $\Fop_\text{\tiny W}$ be the Whitham operator from Theorem~\ref{thm:local}, 
defined by~\eqref{eq:lop_whitham} and~\eqref{eq:fop_whitham}. 
With 
\[
U := \left\{ (\varphi,\mu) 
      \in C^\alpha_\text{even}(\s) \times (0,1) \colon \varphi  < \mu/2 \right\},
\]
we let
\begin{equation}\label{eq:S}
S := \left\{ (\varphi,\mu) \in  U \colon  \Fop_\text{\tiny W}(\varphi,\mu) = 0  \right\}
\end{equation}
be our set of solutions. By $\BL(X)$ we denote the Banach algebra of 
bounded linear operators on a Banach space $X$, 
and by $\Lis(X)$ those which also have an inverse in $\BL(X)$, i.e., those which are Banach space isomorphisms.  

\begin{lemma}[$L^\infty$-bound]\label{lemma:bound}
Let $\mu > 0$. Any bounded Whitham solution satisfies
\begin{equation}\label{eq:bound}
\|\varphi\|_\infty \leq \mu + \| \Lop_\text{\tiny W}\|_{\LL(L^\infty(\s))}. 
\end{equation}
\end{lemma}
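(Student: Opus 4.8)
The plan is to read off the pointwise identity satisfied by a solution and evaluate it at an extremal point. Rewriting the vanishing of $\Fop_\text{\tiny W}$, equivalently the canonical equation \eqref{eq:canonical}, as
\[
\varphi^2 = \mu \varphi - \Lop_\text{\tiny W}\varphi,
\]
one sees that the quadratic term is controlled by the linear, smoothing part. First I would set $M := \|\varphi\|_\infty$ and locate a point $x_0$ at which $|\varphi|$ attains the value $M$; since a bounded Whitham solution has $\Lop_\text{\tiny W}\varphi = K \ast \varphi$ continuous (because $K \in L^1$), this supremum is attained. For a merely bounded solution one instead works with the essential supremum and approximates by sets of positive measure on which $|\varphi| > M - \varepsilon$, letting $\varepsilon \to 0$.

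Evaluating the identity at $x_0$ and using $\mu\varphi(x_0) \leq \mu|\varphi(x_0)| = \mu M$ together with $\varphi(x_0)^2 = M^2 \geq 0$ gives
\[
M^2 = \varphi(x_0)^2 = \mu\varphi(x_0) - (\Lop_\text{\tiny W}\varphi)(x_0) \leq \mu M + \bigl|(\Lop_\text{\tiny W}\varphi)(x_0)\bigr|.
\]
The second step is to estimate the convolution term by the operator norm,
\[
\bigl|(\Lop_\text{\tiny W}\varphi)(x_0)\bigr| \leq \|\Lop_\text{\tiny W}\varphi\|_\infty \leq \|\Lop_\text{\tiny W}\|_{\LL(L^\infty(\s))}\,M.
\]
Combining the two bounds yields $M^2 \leq \bigl(\mu + \|\Lop_\text{\tiny W}\|_{\LL(L^\infty(\s))}\bigr)M$, and dividing by $M$ in the nontrivial case $M > 0$ gives precisely \eqref{eq:bound}; the case $M = 0$ is immediate.

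I do not expect a serious obstacle here: the estimate is essentially a maximum-principle argument exploiting that the nonlinearity is a pure square, so that at the extremum the superlinear term must be dominated by the affine and convolution terms. The only points requiring a little care are (a) justifying that the extremum is attained, or else passing to the essential supremum as above, and (b) recording that $\|\Lop_\text{\tiny W}\|_{\LL(L^\infty(\s))}$ is finite, which follows because $\Lop_\text{\tiny W}$ is convolution against the ($L^1$) periodization of $K$, whence its $L^\infty \to L^\infty$ norm equals the $L^1$-norm of that kernel. Neither point affects the final constant.
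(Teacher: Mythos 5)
Your proof is correct and follows essentially the same route as the paper: rewrite the equation as $\varphi^2 = \mu\varphi - \Lop_\text{\tiny W}\varphi$, bound the convolution term by its operator norm on $L^\infty(\s)$ (finite since the Whitham kernel is integrable), and divide by $\|\varphi\|_\infty$ in the nontrivial case. The only difference is cosmetic: the paper takes the supremum of the pointwise inequality $|\varphi|^2 \leq \mu|\varphi| + \|\Lop_\text{\tiny W}\|_{\LL(L^\infty(\s))}\|\varphi\|_\infty$ directly, which renders your care about attainment of the maximum (or the essential-supremum approximation) unnecessary.
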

\begin{proof}
From $\mu \varphi - \Lop_\text{\tiny W} \varphi - \varphi^2 = 0$ we obtain that
\[
| \varphi|^2 \leq \mu |\varphi| + \| \Lop_\text{\tiny W}\|_{\LL(L^\infty(\s))} \|\varphi\|_\infty, 
\]
where we have used the fact that the Whitham kernel is integrable. Either $\varphi \equiv 0$, or we may take the supremum and divide by $\|\varphi\|_\infty$. In either case, \eqref{eq:bound} holds.
\end{proof}


\begin{lemma}[Fredholm]\label{lemma:fredholm_whitham}
The Frech\'et derivative $\Diff_\varphi \Fop_\text{\rm\tiny W}[\varphi,\mu]$ is a Fredholm operator of index $0$ for all ${(\varphi,\mu) \in U}$. 
\end{lemma}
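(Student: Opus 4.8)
The plan is to compute the Fr\'echet derivative explicitly and exhibit it as a compact perturbation of an isomorphism. Since $\Nop_\text{\tiny W}(\varphi,\mu) = -\varphi^2$, differentiation in $\varphi$ gives
\[
\Diff_\varphi \Fop_\text{\tiny W}[\varphi,\mu] \psi = \mu \psi - \Lop_\text{\tiny W} \psi - 2\varphi \psi,
\]
which we may regroup as
\[
\Diff_\varphi \Fop_\text{\tiny W}[\varphi,\mu] = (\mu - 2\varphi)\id - \Lop_\text{\tiny W},
\]
where $(\mu - 2\varphi)\id$ denotes multiplication by the fixed function $\mu - 2\varphi \in C^\alpha_\text{even}(\s)$. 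The strategy is then to argue that multiplication by $\mu - 2\varphi$ is an isomorphism of $C^\alpha_\text{even}(\s)$ and that $\Lop_\text{\tiny W}$ is compact, whence the difference is Fredholm of index $0$ by stability of the Fredholm index under compact perturbations.

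First I would verify that $C^\alpha_\text{even}(\s)$ is closed under multiplication: for $\alpha \in (0,1)$ the H\"older space is a Banach algebra, so multiplication by any fixed $g \in C^\alpha_\text{even}(\s)$ is a bounded linear operator on $C^\alpha_\text{even}(\s)$. Next, to see that multiplication by $\mu - 2\varphi$ is invertible, I would use the defining constraint $\varphi < \mu/2$ built into $U$, which guarantees $\mu - 2\varphi > 0$ pointwise; since $\varphi$ is continuous and $\s$ compact, $\mu - 2\varphi$ is bounded below by a positive constant, so its pointwise reciprocal $(\mu - 2\varphi)^{-1}$ is again a well-defined element of $C^\alpha_\text{even}(\s)$ (the H\"older algebra is closed under inversion of elements bounded away from zero). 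Multiplication by this reciprocal furnishes a bounded inverse, so $(\mu - 2\varphi)\id \in \Lis(C^\alpha_\text{even}(\s))$, in particular a Fredholm operator of index $0$.

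For the compactness of $\Lop_\text{\tiny W}$ I would simply invoke the argument already given in the proof of Theorem~\ref{thm:local}: by Lemma~\ref{lemma:besov} the symbol $(\tanh(\xi)/\xi)^{1/2} \in S^{-1/2}$ makes $\Lop_\text{\tiny W}$ a bounded operator $C^\alpha_\text{even}(\s) \to C^{\alpha+1/2}_\text{even}(\s)$, and the embedding $C^{\alpha+1/2}_\text{even}(\s) \hookrightarrow C^\alpha_\text{even}(\s)$ is compact, so $\Lop_\text{\tiny W}$ is a compact operator on $C^\alpha_\text{even}(\s)$. Finally, since the Fredholm operators of index $0$ are stable under addition of a compact operator, $\Diff_\varphi \Fop_\text{\tiny W}[\varphi,\mu] = (\mu - 2\varphi)\id - \Lop_\text{\tiny W}$ is Fredholm of index $0$ for every $(\varphi,\mu) \in U$.

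The only genuinely delicate point is the invertibility of the multiplication operator, and this is precisely where the geometric constraint $\varphi < \mu/2$ defining $U$ does its work: without it the coefficient $\mu - 2\varphi$ could vanish somewhere on $\s$, and multiplication by a function with zeros is not invertible (indeed not even Fredholm on $C^\alpha$). The remaining steps are routine once one recalls that $C^\alpha_\text{even}(\s)$ is a Banach algebra in which elements bounded away from zero are invertible, and that Fredholm index is a compact-perturbation invariant. I expect the write-up to be short, with the main content being the explicit regrouping of the derivative and the appeal to the constraint in $U$.
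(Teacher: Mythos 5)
Your proof is correct, and its skeleton coincides with the paper's: the same regrouping $\Diff_\varphi \Fop_\text{\tiny W}[\varphi,\mu] = (\mu - 2\varphi)\id - \Lop_\text{\tiny W}$, the same use of the constraint $\varphi < \mu/2$ in $U$ (together with compactness of $\s$ and the Banach-algebra structure of $C^\alpha$) to conclude $(\mu - 2\varphi)\id \in \Lis(C^\alpha_\text{even}(\s))$, and the same compactness of $\Lop_\text{\tiny W}$ via Lemma~\ref{lemma:besov} and the compact embedding. The one place where you genuinely diverge is the index computation. You invoke the stronger form of the compact-perturbation theorem---a compact perturbation of a Fredholm operator is Fredholm \emph{with the same index}---so that index $0$ follows in one stroke from $(\mu - 2\varphi)\id$ being an isomorphism. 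The paper instead uses only the weaker statement that compact perturbations of isomorphisms are Fredholm, and then pins down the index by a homotopy: it connects $\Diff_\varphi \Fop_\text{\tiny W}[\varphi,\mu]$ to the trivial-curve linearization through $\tau \mapsto (\mu - 2\tau\varphi)\id - \Lop_\text{\tiny W}$, $\tau \in [0,1]$ (each member of the path is Fredholm, since $\mu - 2\tau\varphi > 0$ pointwise for every $\tau \in [0,1]$ whenever $\varphi < \mu/2$), and appeals to the norm-continuity, i.e.\ local constancy, of the Fredholm index along this path, anchored at the known index-zero operator $\mu\id - \Lop_\text{\tiny W}$. Both facts are standard and of comparable depth; your route is shorter and renders the homotopy unnecessary, while the paper's route gets by with a slightly weaker perturbation lemma at the cost of the extra deformation argument. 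The conclusion, and the role played by the hypothesis $(\varphi,\mu) \in U$, are identical in the two proofs.
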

\begin{proof}
We have
\[
\Diff_\varphi \Fop_\text{\tiny W}[\varphi,\mu] =  \left( \mu - 2\varphi \right) \id -  \Lop_\text{\tiny W}, 
\]
and, for any given $(\varphi,\mu) \in U$, that $(\mu-2\varphi)\id \in \Lis (C^\alpha_\text{even}(\s))$. In view of that $\Lop_\text{\tiny W}$ is compact on $C^\alpha(\s)$, the operator $\Diff_\varphi \Fop_\text{\tiny W} [\varphi,\mu]$ is Fredholm. The linearization $\Diff_\varphi \Fop_\text{\tiny W}[0,\mu]$ has Fredholm index zero along the trivial solution curve; we have
\[
\tau \mapsto \left( \mu - 2 \tau \varphi \right) \id - \Lop_\text{\tiny W} \in C\left([0,1],\LL(C^\alpha(\s))\right),
\]
and since the index is continuous in the operator-norm topology, it follows that it is zero also at $(\varphi,\mu)$.
\end{proof}


\begin{lemma}\label{lemma:smoothing_whitham}
Whenever $(\varphi,\mu) \in S$ the function $\varphi$ is smooth, and bounded and closed sets of $S$ are compact in $C^\alpha_\text{\rm even}(\s) \times (0,1)$.   
\end{lemma}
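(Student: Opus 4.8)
The plan is to treat the two assertions separately: the regularity by a bootstrap that exploits the quadratic structure of the equation, and the compactness by combining a gain of regularity with the compact embedding $C^{\beta}_{\text{even}}(\s)\hookrightarrow C^\alpha_{\text{even}}(\s)$, $\beta>\alpha$. First I would record that on $U$ the coefficient $\mu-2\varphi$ is strictly positive: since $\varphi<\mu/2$ and $\varphi$ is continuous on the compact circle $\s$, there is a $\delta>0$ with $\mu-2\varphi\ge\delta$. Rewriting $\Fop_\text{\tiny W}(\varphi,\mu)=0$ as the pointwise quadratic $\varphi^2-\mu\varphi+\Lop_\text{\tiny W}\varphi=0$ and selecting the root compatible with $\varphi<\mu/2$ yields
\[
\varphi=\tfrac12\bigl(\mu-\sqrt{\mu^2-4\,\Lop_\text{\tiny W}\varphi}\,\bigr),\qquad \mu^2-4\,\Lop_\text{\tiny W}\varphi=(\mu-2\varphi)^2\ge\delta^2.
\]

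For the smoothness I would bootstrap on this identity. If $\varphi\in C^{s}_{\text{even}}(\s)$ for a non-integral $s\ge\alpha$, then Lemma~\ref{lemma:besov} applied to the symbol in $S^{-1/2}$ gives $\Lop_\text{\tiny W}\varphi\in C^{s+1/2}_{\text{even}}(\s)$, so the radicand lies in $C^{s+1/2}$ and is bounded below by $\delta^2>0$; composition with $t\mapsto\sqrt t$, which is real-analytic on $[\delta^2,\infty)$, preserves the H\"older class, whence $\varphi\in C^{s+1/2}_{\text{even}}(\s)$. Because $\tfrac12<\alpha<1$, the exponents $\alpha+k/2$ never hit an integer, so the iteration runs unobstructed and delivers $\varphi\in C^\infty$.

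For the compactness, let $B\subseteq S$ be bounded and closed and take a sequence $(\varphi_n,\mu_n)\in B$. Since $\mu_n\in(0,1)$, a subsequence satisfies $\mu_n\to\mu$; since $\{\varphi_n\}$ is bounded in $C^\alpha_{\text{even}}(\s)$ and $\Lop_\text{\tiny W}$ is compact there, a further subsequence satisfies $\Lop_\text{\tiny W}\varphi_n\to g$ in $C^\alpha_{\text{even}}(\s)$. The radicands $\mu_n^2-4\,\Lop_\text{\tiny W}\varphi_n=(\mu_n-2\varphi_n)^2\ge0$ then converge in $C^\alpha$, and as $t\mapsto\sqrt t$ is uniformly continuous on $[0,\infty)$ the representation above forces $\varphi_n\to\varphi:=\tfrac12(\mu-\sqrt{\mu^2-4g})$ at least uniformly. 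By lower semicontinuity of the H\"older seminorm under uniform limits, $\varphi\in C^\alpha_{\text{even}}(\s)$ with $\|\varphi\|_{C^\alpha}\le\sup_n\|\varphi_n\|_{C^\alpha}$, and passing to the limit in the equation shows $(\varphi,\mu)$ is again a Whitham solution. If the limit lies in $U$—so that $\mu-2\varphi\ge\delta>0$ and $\mu\in(0,1)$—then the radicand is bounded below, the square-root composition is continuous into $C^\alpha$, the convergence improves to $C^\alpha_{\text{even}}(\s)$, and closedness of $B$ places $(\varphi,\mu)$ in $B$; this would give sequential compactness.

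I expect the main obstacle to be exactly the italicised proviso in the previous step. The solution map $g\mapsto\tfrac12(\mu-\sqrt{\mu^2-4g})$ is continuous into $C^\alpha_{\text{even}}(\s)$ only while its argument stays away from the degeneracy $\mu-2\varphi=0$, that is, away from the critical boundary $\{\varphi=\mu/2\}$ of $U$ and from the endpoints $\mu\in\{0,1\}$. The delicate point is therefore a uniform separation estimate: a bounded, closed family of $C^\alpha$ solutions with $\alpha>\tfrac12$ must stay a fixed distance $\delta_0>0$ from the boundary, i.e. it cannot degenerate to a `highest', critical wave. I would isolate this as a separate a priori estimate, using the lower semicontinuity already invoked to keep any limit within $C^\alpha_{\text{even}}(\s)$ and exploiting the $\tfrac12$-smoothing of $\Lop_\text{\tiny W}$ against the threshold $\alpha>\tfrac12$; it is here that the subcriticality built into the definition of $U$ is essential, and it is the part I would expect to require the most care.
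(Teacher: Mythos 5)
Your smoothness argument is essentially the paper's own: the same rewriting $\varphi = \frac{\mu}{2} - \bigl(\frac{\mu^2}{4} - \Lop_{\text{\tiny W}}\varphi\bigr)^{1/2}$, the same appeal to Lemma~\ref{lemma:besov} and to the analyticity of $t\mapsto\sqrt{t}$ away from zero, and the same bootstrap; your check that the exponents $\alpha+k/2$ never hit an integer is a detail the paper leaves implicit. That half is fine.

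The compactness half, however, has a genuine gap --- the very one you flag. Your sequential argument produces a uniform limit $(\varphi,\mu)$ solving the equation, but to upgrade to $C^\alpha$-convergence and then invoke closedness of $B$ you must already know that the limit lies in $U$, i.e.\ that $\mu-2\varphi$ is bounded below and $\mu\in(0,1)$. This is circular: membership of the limit in $U$ would follow from $(\varphi,\mu)\in B\subset S$, which in your scheme can only be concluded \emph{after} $C^\alpha$-convergence is established. Deferring the difficulty to a ``separate a priori estimate'' does not repair it, for two reasons. First, for closed bounded subsets of $S$ the uniform separation $\inf_x(\mu/2-\varphi)\geq\delta_0>0$ is essentially equivalent to the compactness assertion itself (on a compact set the continuous functional $(\varphi,\mu)\mapsto\min_x(\mu/2-\varphi(x))$, positive on $S$, attains a positive minimum), so you would be restating the lemma, not proving it. Second, proving such an estimate directly amounts to excluding that $C^\alpha$-bounded solutions degenerate to a highest wave, a statement about crest regularity that is precisely of the type the paper declares open. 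The paper's proof is organized so that this issue never arises as a statement about limits of solutions: since every $(\varphi,\mu)\in K$ is a fixed point of $\tilde\Fop(\cdot,\mu)$, the set of solution profiles is \emph{literally} the image $\tilde\Fop K$; the paper invokes the mapping property that $\tilde\Fop$, being analytic from the open set $V$ into $C^{\alpha+1/2}(\s)$, which embeds compactly into $C^{\alpha}(\s)$, takes bounded subsets of $V$ into precompact sets, so $\tilde\Fop K$ is precompact in $C^\alpha(\s)$; closedness of $K$ in the ambient $C^\alpha(\s)\times\R$ topology then yields compactness, with no passage to the limit in the equation and no need to locate the limit inside $U$. (Your instinct that some uniform control of the radicand is needed is sound --- it is hidden inside that asserted mapping property of $\tilde\Fop$ --- but the paper reduces the entire claim to this single property of the smoothing operator, whereas your route leaves an unproven, and with these tools unprovable, estimate about the solution set itself.) The missing idea, in short, is the identity $\{\varphi:(\varphi,\mu)\in K\}=\tilde\Fop K$ combined with the smoothing of $\tilde\Fop$, in place of sequential extraction through the equation.
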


\begin{proof}
We write the Whitham equation in the form
\begin{equation}\label{eq:regularity}
\varphi = \tilde \Fop(\varphi,\mu) :=  \frac{\mu}{2} - \left( \frac{\mu^2}{4}  -  \Lop_\text{\tiny W} \varphi \right)^{1/2}.
\end{equation}
The mapping $\Lop_\text{\tiny W}$ is bounded and linear $C^{\alpha}(\s) \to C^{\alpha+1/2}(\s)$ (cf. Lemma~\ref{lemma:besov}), and $x \mapsto \sqrt{x}$ is real analytic for $x > 0$. Consequently, if we let
\[
V := \left\{ (\varphi,\mu) \in C^{\alpha}(\s) \times (0,1) \colon  {\textstyle \frac{\mu^2}{4}}  > \Lop_\text{\tiny W} \varphi  \right\},
\]
then $\tilde \Fop$ is real analytic $V \to C^{\alpha+1/2}(\s)$. The space $C^{\alpha + 1/2}(\s)$ is relatively compact in $C^{\alpha}(\s)$, whence $\tilde \Fop$ maps bounded subsets of $V$ into pre-compact sets.  We may then prove:\\[-6pt]

\item[] {\sc Smoothness}.
For any $\varphi \in S$  there exists a constant $R_1$ such that $\sup \varphi \leq R_1 < \mu/2$. Since $\varphi$ is a fixed point of $\tilde \Fop(\cdot,\mu)$ we have $(\varphi,\mu) \in V$.  A straightforward induction argument reveals that $\varphi \in C^\infty(\s)$.\\[-6pt]

\item[] {\sc Compactness}. 
Let $K \subset S$ be bounded and closed in the $C^{\alpha}(\s) \times \R$-topology. Then $K \subset V$, and $\{ \varphi \colon (\varphi,\mu) \in K\} = \tilde \Fop K$ is pre-compact in $C^\alpha(\s)$. Any sequence $\{(\varphi_j,\mu_j)\}_{j \geq 1} \subset K$ thus converges to a pair $(\varphi_0,\mu_0)$ in the $C^{\alpha}(\s) \times \R$-topology. The fact that $K$ is closed implies that $(\varphi_0,\mu_0) \in K$, whence $K$ is compact.

\end{proof}

\subsection*{\sc Global bifurcation} 
Using Lemmata~\ref{lemma:fredholm_whitham} and~\ref{lemma:smoothing_whitham} we shall now show that the local branches  of Whitham solutions in Corollary~\ref{cor:local_whit} can be globally extended. The following result is immediate (cf. \cite{MR1956130}) if we are able to show that, in some small neighborhood $|\varepsilon| < \delta$, $\mu(\varepsilon)$ is not identically equal to a constant.
 

\begin{theorem}[Global bifurcation]
\label{thm:global_whit}
The local bifurcation curves $\varepsilon \mapsto (\varphi(\varepsilon),\mu(\varepsilon))$ of solutions to the Whitham equation from Corollary~\ref{cor:local_whit} extend to global continuous curves of solutions $\R_{\geq 0} \to S$, with $S$ as in \eqref{eq:S}. One of the following alternatives holds:
\begin{itemize}
\item[(i)] $\| \varphi(\varepsilon)\|_{C^\alpha(\s)}$ is unbounded as $\varepsilon \to \infty$.
\item[(ii)] The pair $(\varphi(\varepsilon),\mu(\varepsilon))$ approaches the boundary of $S$ as $\varepsilon \to \infty$.
\item[(iii)] The function $\varepsilon \mapsto (\varphi(\varepsilon),\mu(\varepsilon))$ is $T$-periodic, for some $T \in (0,\infty)$.
\end{itemize}
\end{theorem}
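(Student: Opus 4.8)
The plan is to read off the global curves directly from the analytic global bifurcation theorem of \cite{MR1956130}, the global counterpart of the Crandall--Rabinowitz result already used in Corollary~\ref{cor:local_whit}. Its hypotheses are exactly what the preceding lemmata supply: $\Fop_{\text{\tiny W}}$ is real-analytic on $C^\alpha_{\text{even}}(\s) \times (0,1)$ by Theorem~\ref{thm:local}; the Fr\'echet derivative $\Diff_\varphi \Fop_{\text{\tiny W}}[\varphi,\mu]$ is Fredholm of index $0$ at every point of $U$ by Lemma~\ref{lemma:fredholm_whitham}; bounded and closed subsets of $S$ are compact by Lemma~\ref{lemma:smoothing_whitham} (this is the properness condition that drives the continuation); and each local branch issues from a simple eigenvalue $\mu_k$ with the transversality guaranteed by Corollary~\ref{cor:local_whit}. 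Granting these, the theorem continues each local curve to a global continuous, piecewise-analytic and reparametrizable curve $\R_{\geq 0} \to S$ for which one of three alternatives holds: it leaves every closed bounded subset of $U$, it is a closed loop, or its $C^\alpha \times \R$-norm is unbounded. Since $\mu(\varepsilon) \in (0,1)$ is bounded, the last alternative forces $\|\varphi(\varepsilon)\|_{C^\alpha(\s)} \to \infty$, so the trichotomy specializes to (i)--(iii).

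As the authors point out just before the statement, the continuation is genuine only once one knows that $\mu(\varepsilon)$ is not locally constant, so the whole proof reduces to establishing this. I would verify it through the bifurcation expansion. Writing $\varphi(\varepsilon) = \varepsilon \cos(kx) + \varepsilon^2 \xi_1 + \bigOh(\varepsilon^3)$ and $\mu(\varepsilon) = \mu_k + \varepsilon\mu_1 + \varepsilon^2 \mu_2 + \bigOh(\varepsilon^3)$ and inserting these into $\Fop_{\text{\tiny W}}(\varphi,\mu)=0$, the first order reproduces the eigenrelation $\Lop_{\text{\tiny W}}\cos(kx) = \mu_k \cos(kx)$. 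The Lyapunov--Schmidt solvability condition at second order, the vanishing of the $\cos(kx)$-component of $(\mu_k - \Lop_{\text{\tiny W}})\xi_1 + \mu_1 \cos(kx) - \cos^2(kx)$, gives $\mu_1 = 0$, because $\cos^2(kx) = \tfrac12 + \tfrac12\cos(2kx)$ carries no first harmonic. The remaining equation $(\mu_k - \Lop_{\text{\tiny W}})\xi_1 = \cos^2(kx)$ determines $\xi_1$ uniquely up to its $\cos(kx)$-component (normalized away), and is solvable precisely because $\mu_k$ resonates neither with the mean ($\mu_k \neq 1$) nor with the second harmonic ($\mu_k \neq \mu_{2k}$); both hold since $k \mapsto (\tanh(k)/k)^{1/2}$ is strictly decreasing.

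Because $\mu_1 = 0$, the first possibly non-vanishing correction is $\mu_2 = \tfrac12 \mu''(0)$, read off from the $\cos(kx)$-projection at third order. Using the computed $\xi_1$ one finds
\[
\mu_2 = \frac{1}{\mu_k - 1} + \frac{1}{2(\mu_k - \mu_{2k})},
\]
and the theorem follows once this is shown to be nonzero, for then $\mu(\varepsilon) \not\equiv \mu_k$ near $\varepsilon = 0$.

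I expect this last step to be the genuine obstacle. The two summands carry opposite signs, the first being negative since $\mu_k < 1$ and the second positive since $\mu_k > \mu_{2k}$, so $\mu_2 \neq 0$ is not automatic: it amounts to the transcendental non-resonance $3\mu_k \neq 1 + 2\mu_{2k}$ for every integer $k \geq 1$. For the fundamental branch $k=1$ one checks directly that $\mu_2 \approx -5 \neq 0$ (the branch is subcritical), but for general $k$ I would study $k \mapsto 3\mu_k - 2\mu_{2k}$, whose values $1.23,\,1.08,\,0.91,\dots$ at $k=1,2,3$ decay toward $0$, arguing by a monotonicity and tail estimate that it meets the value $1$ only strictly between two consecutive integers and hence never at an integer. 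This rules out cancellation, confirms $\mu(\varepsilon)$ is non-constant, and completes the proof.
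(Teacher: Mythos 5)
Your proposal is correct and follows the same route as the paper: invoke the analytic global bifurcation theorem of \cite{MR1956130}, with the hypotheses supplied by Lemma~\ref{lemma:fredholm_whitham} (Fredholm index $0$ on $U$) and Lemma~\ref{lemma:smoothing_whitham} (compactness of closed bounded subsets of $S$), and reduce everything to showing that $\mu(\varepsilon)$ is not identically constant near $\varepsilon=0$, which is settled by computing $\ddot\mu(0)$ through the Lyapunov--Schmidt reduction; your second-order coefficient $\frac{1}{\mu_k-1}+\frac{1}{2(\mu_k-\mu_{2k})}$ coincides for $k=1$ with the paper's $C_1+C_2$ in Theorem~\ref{thm:expansion}. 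Where you genuinely go beyond the paper is in the branches with $k\geq 2$: the paper's Theorem~\ref{thm:expansion} carries out the computation only for the principal branch $k=1$, where $\ddot\mu(0)\approx -5$ is checked directly, so your observation that for general $k$ the two summands have opposite signs---making non-vanishing equivalent to the non-resonance $3\mu_k-2\mu_{2k}\neq 1$---identifies a point the paper passes over (indeed this quantity crosses $1$ between $k=2$ and $k=3$, so the pitchfork is subcritical for $k=1,2$ but supercritical for $k\geq 3$). Your final step is only sketched, but it closes easily and without any monotonicity of the composite function: since $\tanh$ is increasing, $\mu_{2k}=\left(\tanh(2k)/(2k)\right)^{1/2}\geq \mu_k/\sqrt{2}$, whence $3\mu_k-2\mu_{2k}\leq (3-\sqrt{2})\,\mu_k<1$ as soon as $\mu_k<(3-\sqrt{2})^{-1}\approx 0.63$, which holds for all $k\geq 3$ because $\mu_3\approx 0.58$ and $k\mapsto\mu_k$ is decreasing; combined with your explicit values $\approx 1.23$ and $\approx 1.08$ at $k=1,2$, this yields $\ddot\mu(0)\neq 0$ on every branch. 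In this one respect your argument is more complete than the paper's own proof, which, as written, verifies the crucial non-degeneracy only for the curve covered by Theorem~\ref{thm:expansion}.
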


\begin{proof}
According to \cite[Thm 9.1.1]{MR1956130}, and in view of Lemmata~\ref{lemma:fredholm_whitham} and~\ref{lemma:smoothing_whitham}, the assertion follows if any of the derivatives $\mu^{(k)}(0) \neq 0$. It will be shown in Theorem~\ref{thm:expansion} that $\ddot \mu(0) \neq 0$.
\end{proof}

In order to establish the bifurcation formulas for Whitham equation---and thereby Theorem~\ref{thm:global_whit}---we apply the Lyapunov--Schmidt reduction (cf. \cite{MR2004250}). Let $\mu^* := \mu_1$ be the bifurcation point from Corollary~\ref{cor:local_whit} and let 
\[
\varphi^*(x) := \cos(x). 
\]
Let furthermore
\[
M := \left\{ \sum_{k \neq 1} a_k \cos(kx) \in C^\alpha(\s) \right\},
\]
and
\[
N := \ker \left( \Diff_\varphi \Fop_\text{\tiny W}[0,\mu^*] \right) = \spn(\varphi^*).
\]
Then $C^{\alpha}_\text{\rm even}(\s) = M \oplus N$ and we can use the canonical embedding $C^{\alpha}(\s) \hookrightarrow L^2(\s)$ to define a continuous projection
\[
\Piop \varphi := \left\langle \varphi, \varphi^* \right\rangle_{L^2(\s)} \, \varphi^*, 
\]
with $\langle u,v \rangle_{L^2(\s)} := \frac{1}{\pi} \int_\s u v \,dx$. 

\begin{theorem}[Lyapunov--Schmidt, \cite{MR2004250}]
\label{thm:reduction}
There exists a neighborhood $\bigOh \times Y \subset U$ around $(0,\mu^*)$ in which the problem
\begin{equation}\label{eq:infinitedimenionsionalproblem}
\Fop_\text{\tiny W}(\varphi, \mu) = 0
\end{equation}
is equivalent to that
\begin{equation}\label{eq:finitedimenionsionalproblem}
\Phi(\varepsilon \varphi^*, \mu) := \Piop\, \Fop_\text{\tiny W}\left(\varepsilon \varphi^* + \psi(\varepsilon \varphi^*,\mu),\mu\right) = 0
\end{equation}
for functions $\psi \in C^\infty( \bigOh_N \times Y, M)$, $\Phi \in C^\infty( \bigOh_N \times Y, N)$, and $\bigOh_N \subset N$ an open neighborhood of the zero function in $N$. One has $\Phi(0,\mu^*) = 0$, $\psi(0,\mu^*)=0$, $\Diff_\varphi \psi(0,\mu^*) = 0$, and solving the finite-dimensional problem~\eqref{eq:finitedimenionsionalproblem} provides a solution $\varphi = \varepsilon\varphi^* + \psi(\varepsilon\varphi^*,\mu)$ of the infinite-dimensional problem~\eqref{eq:infinitedimenionsionalproblem}.
\end{theorem}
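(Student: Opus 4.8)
The plan is to realize Theorem~\ref{thm:reduction} as a direct application of the analytic Lyapunov--Schmidt reduction \cite{MR2004250}, whose hypotheses I verify for $\Fop_\text{\tiny W}$ at the bifurcation point $(0,\mu^*)$. Recall from Theorem~\ref{thm:local} that $\Fop_\text{\tiny W}$ is analytic and that $A := \Diff_\varphi \Fop_\text{\tiny W}[0,\mu^*] = \mu^*\id - \Lop_\text{\tiny W}$ is Fredholm of index $0$; by Corollary~\ref{cor:local_whit} its kernel is the one-dimensional space $N = \spn(\varphi^*)$. Writing $\varphi = v + w$ with $v = \Piop\varphi \in N$ and $w = (\id - \Piop)\varphi \in M$, the equation $\Fop_\text{\tiny W}(v+w,\mu) = 0$ is equivalent to the pair consisting of the auxiliary (range) equation $(\id - \Piop)\Fop_\text{\tiny W}(v+w,\mu) = 0$ and the bifurcation (kernel) equation $\Piop\Fop_\text{\tiny W}(v+w,\mu) = 0$.

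First I would solve the range equation for $w$ in terms of $(v,\mu)$ by the implicit function theorem. Set $G(v,w,\mu) := (\id - \Piop)\Fop_\text{\tiny W}(v+w,\mu)$, so that $G \colon N \times M \times (0,1) \to M$ is analytic with $G(0,0,\mu^*) = 0$. The hypothesis to check is that $\Diff_w G[0,0,\mu^*]$, which is the restriction to $M$ of $(\id - \Piop)A$, is an isomorphism of $M$. This is where the simplicity of the eigenvalue enters: since $\Lop_\text{\tiny W}$ acts diagonally in the cosine basis with symbol $m_k := (\tanh(k)/k)^{1/2}$, the operator $A$ sends $\cos(kx) \mapsto (\mu^* - m_k)\cos(kx)$, preserving $M$, with $\mu^* \neq m_k$ for every $k \neq 1$ and $\inf_{k \neq 1}|\mu^* - m_k| > 0$ (the symbol tends to $\mu^* > 0$ as $k \to \infty$). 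Because $\Piop$ annihilates $M$, this restriction equals $A|_M$, a bounded bijection of $M$ with bounded inverse. The analytic implicit function theorem then furnishes a neighborhood $\bigOh_N \times Y$ and an analytic (in particular $C^\infty$) map $\psi \colon \bigOh_N \times Y \to M$ with $w = \psi(v,\mu)$ the unique small solution of $G = 0$, and $\psi(0,\mu^*) = 0$.

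Next I would record the derivative identities. Differentiating $G(v,\psi(v,\mu),\mu) \equiv 0$ in $v$ at $(0,\mu^*)$ and using that $A$ vanishes on $N$ gives $\Diff_w G[0,0,\mu^*]\,\Diff_\varphi\psi(0,\mu^*) = 0$, whence $\Diff_\varphi\psi(0,\mu^*) = 0$ by invertibility of $\Diff_w G[0,0,\mu^*]$. Substituting $w = \psi(v,\mu)$ into the bifurcation equation and parametrizing $v = \varepsilon\varphi^*$ produces $\Phi(\varepsilon\varphi^*,\mu) = \Piop\Fop_\text{\tiny W}(\varepsilon\varphi^* + \psi(\varepsilon\varphi^*,\mu),\mu)$, analytic into the one-dimensional space $N$, with $\Phi(0,\mu^*) = 0$. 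The equivalence assertion then follows from the uniqueness in the implicit function theorem: every solution of~\eqref{eq:infinitedimenionsionalproblem} in $\bigOh \times Y$ decomposes as $v + \psi(v,\mu)$ and hence satisfies~\eqref{eq:finitedimenionsionalproblem}, while conversely each zero of $\Phi$ yields the full solution $\varphi = \varepsilon\varphi^* + \psi(\varepsilon\varphi^*,\mu)$.

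The main obstacle is the invertibility step for $\Diff_w G[0,0,\mu^*]$: one must know not only that $\ker A$ is one-dimensional but that the chosen algebraic complement $M$ coincides with the range, so that $(\id - \Piop)$ carries that range isomorphically onto $M$. Here this is transparent because $A$ is a Fourier multiplier self-adjoint with respect to $\langle \cdot,\cdot\rangle_{L^2(\s)}$, making $M = N^\perp$ precisely the range. The only quantitative point requiring care is the uniform lower bound $\inf_{k \neq 1}|\mu^* - m_k| > 0$: it guarantees that the inverse symbol $(\mu^* - m_k)^{-1}$ is a bounded multiplier of order $0$, so that $A|_M$ is surjective onto $M$ within the H\"older scale (cf. Lemma~\ref{lemma:besov}) and not merely an algebraic bijection, after which the open mapping theorem yields the bounded inverse.
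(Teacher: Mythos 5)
Your overall strategy is sound, and it is worth noting that the paper itself offers no proof of this theorem: it is quoted directly from the cited reference \cite{MR2004250}, whose hypotheses---analyticity of $\Fop_\text{\tiny W}$, Fredholmness of index $0$ of $\Diff_\varphi \Fop_\text{\tiny W}[0,\mu^*]$, and one-dimensionality of its kernel---are exactly the facts you assemble from Theorem~\ref{thm:local} and Corollary~\ref{cor:local_whit}. Your reduction (splitting into the range and kernel equations, solving the range equation by the analytic implicit function theorem, and then reading off $\psi(0,\mu^*)=0$, $\Diff_\varphi \psi(0,\mu^*)=0$ and the equivalence from uniqueness) is the standard proof of that cited theorem, so in substance you are reconstructing the argument the paper delegates to the literature, and the skeleton is correct.

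There is, however, one step whose justification fails as written: the claim that $\inf_{k \neq 1}|\mu^* - m_k| > 0$ makes $(\mu^* - m_k)^{-1}$ ``a bounded multiplier of order $0$'' via Lemma~\ref{lemma:besov}. That lemma applies to symbols $a \in S^0$, i.e.\ smooth functions on $\R$ satisfying derivative estimates; but the natural candidate $\xi \mapsto (\mu^* - m(\xi))^{-1}$, with $m(\xi) = (\tanh(\xi)/\xi)^{1/2}$, has poles at $\xi = \pm 1$ precisely because $\mu^* = m(1)$, so it is not a symbol at all, and boundedness of the discrete values $(\mu^* - m_k)^{-1}$, $k \neq 1$, is by itself not sufficient for boundedness of a Fourier multiplier on H\"older spaces (unlike on $L^2$). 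The gap is local and repairable in two ways. Either modify $m$ smoothly on $(\frac{1}{2},\frac{3}{2})$---harmless, since functions in $M$ carry no frequency-one mode---so that the inverse symbol genuinely lies in $S^0$; or, more simply, avoid multipliers altogether: $\Lop_\text{\tiny W}$ restricted to the closed invariant subspace $M$ is compact (Theorem~\ref{thm:local}), hence $A|_M = \mu^* \id - \Lop_\text{\tiny W}|_M$ is Fredholm of index $0$ on $M$, and since it is injective ($\ker A = N$ meets $M$ trivially) it is bijective, with bounded inverse by the open mapping theorem. Relatedly, your remark that self-adjointness makes ``$M = N^\perp$ precisely the range'' only gives the inclusion $\range(A) \subseteq M$ directly; the reverse inclusion needs the index-$0$ property, which forces the closed, codimension-one subspace $\range(A)$ to coincide with the codimension-one subspace $M$. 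That codimension count (or the compactness argument above)---not the symbol bound---is what actually closes the invertibility step.
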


\begin{theorem}[Bifurcation formulas]\label{thm:expansion}
Let $\mu^* = \sqrt{\tanh{(1)}}$, $C_1 := (\mu^* -1)^{-1}$, and $C_2 :=  (2 \mu^* - \sqrt{ 2\tanh{(2)}})^{-1}$.
The main bifurcation curve ($k=1$) for the Whitham equation found in Corollary~\ref{cor:local_whit} satisfies 
\begin{equation}\label{eq:phiasym}
\varphi(\varepsilon) = \varepsilon \cos(x) + \varepsilon^2 \left( {\textstyle \frac{1}{2}} C_1 + C_2 \cos(2x) \right) + \bigOh(\varepsilon^3),
\end{equation}
and
\begin{equation}\label{eq:muasym}
\mu(\varepsilon) = \mu^* + \varepsilon^2  \left( C_1 + C_2 \right) + \bigOh(\varepsilon^3),
\end{equation}
in $C^\alpha_{\text{\rm even}}(\s) \times (0,1)$ as $\varepsilon \to 0$. In particular, $\ddot\mu(0) < 0$, Corollary~\ref{cor:local_whit}~(i) describes a subcritical pitchfork bifurcation, and Theorem~\ref{thm:global_whit} holds.
\end{theorem}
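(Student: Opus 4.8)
The plan is to exploit the analyticity of the branch guaranteed by the Crandall--Rabinowitz construction (Corollary~\ref{cor:local_whit}) together with the Lyapunov--Schmidt reduction of Theorem~\ref{thm:reduction}: the local curve admits a convergent power-series expansion $\varphi(\varepsilon) = \varepsilon\cos(x) + \sum_{j\geq 2}\varepsilon^j\varphi_j$, with each correction $\varphi_j \in M$ (hence carrying no $\cos(x)$-mode), and $\mu(\varepsilon) = \mu^* + \sum_{j\geq 1}\varepsilon^j\mu_j$, the leading term being fixed by the normalization $\Diff_\varepsilon\varphi(0) = \cos(x)$. I would insert this ansatz into $\mu\varphi - \Lop_\text{\tiny W}\varphi - \varphi^2 = 0$ and collect equal powers of $\varepsilon$. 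The entire computation is organized by the diagonal action of the Whitham multiplier on the cosine basis, $\Lop_\text{\tiny W}\cos(kx) = m_k\cos(kx)$ with $m_k = (\tanh(k)/k)^{1/2}$; in particular $m_0 = 1$, $m_1 = \mu^*$, and $m_2 = \tfrac12\sqrt{2\tanh(2)}$, so that $\mu^*\id - \Lop_\text{\tiny W}$ is invertible on $M$ but annihilates $N = \spn(\cos(x))$. The key mechanism at every order is the Fredholm solvability condition: the projection of the inhomogeneity onto $\cos(x)$ must vanish, and this is precisely what determines the coefficients $\mu_j$.

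At order $\varepsilon$ the equation reads $(\mu^* - m_1)\cos(x) = 0$, which holds automatically. At order $\varepsilon^2$ the inhomogeneity is $(\cos x)^2 = \tfrac12(1 + \cos(2x))$, which has no $\cos(x)$-component; the solvability condition therefore forces $\mu_1 = 0$, reflecting the pitchfork symmetry. Inverting $\mu^*\id - \Lop_\text{\tiny W}$ on the modes $k = 0$ and $k = 2$ then yields $\varphi_2 = \tfrac12 C_1 + C_2\cos(2x)$, upon recognizing $\mu^* - m_0 = \mu^* - 1 = C_1^{-1}$ and $2(\mu^* - m_2) = 2\mu^* - \sqrt{2\tanh(2)} = C_2^{-1}$. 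This reproduces \eqref{eq:phiasym}.

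At order $\varepsilon^3$ the only contribution coupling to the kernel comes from the cross term $2(\cos x)\varphi_2$ in $\varphi^2$. Using $2\cos(x)\cos(2x) = \cos(3x) + \cos(x)$, its $\cos(x)$-component equals $C_1 + C_2$, while $\mu^*\varphi_3 - \Lop_\text{\tiny W}\varphi_3$ contributes nothing in that direction because $\varphi_3 \in M$. The solvability condition thus reads $\mu_2 = C_1 + C_2$, which is \eqref{eq:muasym}; the remaining modes merely fix $\varphi_3$ and are not needed. Since $\mu(\varepsilon) = \mu^* + \mu_2\varepsilon^2 + \bigOh(\varepsilon^3)$ with $\mu_1 = 0$, one obtains $\ddot\mu(0) = 2(C_1 + C_2)$.

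Finally I would establish the sign. Because $\mu^* = \sqrt{\tanh(1)} < 1$ we have $C_1 = (\mu^* - 1)^{-1} < 0$, and since $2\mu^* > \sqrt{2\tanh(2)}$ we have $C_2 > 0$; inserting the numerical values of $\tanh(1)$ and $\tanh(2)$ confirms $|C_1| > C_2$, hence $C_1 + C_2 < 0$ and $\ddot\mu(0) < 0$. This identifies the bifurcation as a subcritical pitchfork and, via $\ddot\mu(0) \neq 0$, completes the proof of Theorem~\ref{thm:global_whit}. I expect the main obstacle to be bookkeeping rather than conceptual: one must keep the corrections $\varphi_j$ strictly in $M$ so that the successive solvability conditions are well posed, and carefully justify the estimate $C_1 + C_2 < 0$, which is where the specific numerical features of the Whitham multiplier---as opposed to those of a generic dispersion relation---genuinely enter.
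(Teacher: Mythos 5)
Your proposal is correct: it arrives at exactly the paper's conclusions ($\mu_1=0$, $\varphi_2=\tfrac12 C_1+C_2\cos(2x)$, $\mu_2=C_1+C_2$, and the sign $C_1+C_2<0$ checked numerically, which is also all the paper does for the sign). However, you organize the computation differently. The paper never expands the equation in powers of $\varepsilon$; it instead invokes the explicit bifurcation formulas of \cite[Section 1.6]{MR2004250}: $\dot\mu(0)$ is computed from $\Diff^2_{\varphi\varphi}\Fop_{\text{\tiny W}}$ and $\Diff^2_{\varphi\mu}\Fop_{\text{\tiny W}}$ at $(0,\mu^*)$, and $\ddot\mu(0)$ from the third derivative of the reduced map $\Phi$, which forces the paper to track derivatives of $\psi$: it must check that the $\Diff^3_{\varphi\varphi\varphi}\psi$ contribution is annihilated by $\Piop$ (using $\range(\mu^*\id-\Lop_{\text{\tiny W}})=M$) and then compute $\Diff^2_{\varphi\varphi}\psi[0,\mu^*](\varphi^*,\varphi^*)$ by inverting the linearization on $M$, cf.~\eqref{eq:DDphipsi}. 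Your Stokes-type expansion reaches the same two computational kernels by a more elementary route: your order-$\varepsilon^2$ inversion is precisely \eqref{eq:DDphipsi} up to the factor $\varphi_2=\tfrac12\Diff^2_{\varphi\varphi}\psi[0,\mu^*](\varphi^*,\varphi^*)$, and your cross term $2\cos(x)\varphi_2$, whose $\cos(x)$-coefficient yields $\mu_2$, is the paper's term $\varphi^*\Diff^2_{\varphi\varphi}\psi[0,\mu^*](\varphi^*,\varphi^*)$ whose $\cos(x)$-coefficient equals $\tfrac12\ddot\mu(0)$. What your version buys is self-containedness: no appeal to the abstract formulas for $\dot\mu(0)$, $\ddot\mu(0)$, and no bookkeeping with derivatives of $\Phi$. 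What it costs is that the ansatz itself needs justification, and you correctly supply both required ingredients: the expansion is legitimate because the Crandall--Rabinowitz branch is analytic, and the solvability conditions are exactly the vanishing of the $\cos(x)$-components because every correction $\varphi_j$, $j\geq 2$, lies in $M$, which follows from the Lyapunov--Schmidt parametrization $\varphi(\varepsilon)=\varepsilon\varphi^*+\psi(\varepsilon\varphi^*,\mu(\varepsilon))$ with $\psi$ valued in $M$ and from the diagonal action of $\Lop_{\text{\tiny W}}$ on the cosine basis. Both arguments ultimately rest on the same facts ($\range(\mu^*\id-\Lop_{\text{\tiny W}})=M$, invertibility on $M$, and the identities $\cos^2 x=\tfrac12(1+\cos 2x)$, $2\cos x\cos 2x=\cos x+\cos 3x$), so the difference is one of packaging rather than substance---but yours is a legitimate, and arguably cleaner, alternative proof.
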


\begin{remark}
The solution curve $\varphi(\varepsilon)$ for the Whitham equation differs significantly from that of the KdV equation. Indeed, relying on Corollary~\ref{cor:local_kdv} it is possible to calculate the asymptotic expansion
\[
\varphi_{\text{\tiny{KdV}}}(\varepsilon) = \varepsilon \cos(x) + \varepsilon^2 \left( \cos(2x) - 3 \right) + \bigOh(\varepsilon^3),
\]
in $C^\alpha_{\text{even}}(\s)$ as $\varepsilon \to 0$ for the periodic KdV-solutions found there. Although this agrees with \eqref{eq:phiasym} to the first order in $\varepsilon$, it does not for $\varepsilon^2$. In particular, the coefficient in front of $\cos(2x)$ is positive for the KdV-solutions, whereas it is negative for the Whitham solutions.
\end{remark}

\begin{proof}
We perform first the analysis for $\mu$. We already know that $\varepsilon \mapsto \mu(\varepsilon)$ is analytic at $\varepsilon = 0$ and that $\mu(0) = \mu^*$, so it remains to show that $\dot\mu(0) = 0$ and to determine $\ddot \mu(0)$. We have
\begin{align*}
\Diff_{\varphi\varphi}^2 \Fop_\text{\tiny W}[0,\mu^*](\varphi^*,\varphi^*) &= -2 {\varphi^*}^2,\\
\Diff_{\varphi\mu}^2 \Fop_\text{\tiny W}[0,\mu^*]\varphi^* &= \varphi^*,
\end{align*}
and the value of $\dot\mu(0)$ may be explicitly calculated---for the bifurcation formulas used in this proof, we refer to~\cite[Section 1.6]{MR2004250}---as 
\[
\dot\mu(0) = -\frac{1}{2} \frac{\langle \Diff_{\varphi\varphi}^2 \Fop_\text{\tiny W}[0,\mu^*](\varphi^*,\varphi^*), \varphi^*  \rangle_{L^2(\s)}}{\langle \Diff_{\varphi\mu}^2 \Fop_\text{\tiny W}[0,\mu^*]\varphi^*, \varphi^* \rangle_{L^2(\s)}} = 0,
\]
since $\int_{\s} \cos^3(x)\,dx = 0$.

Moreover, when $\dot \mu(0) = 0$ one has that 
\begin{equation}\label{eq:DDmu}
\ddot \mu(0) = -\frac{1}{3} \frac{\left\langle \Diff_{\varphi\varphi\varphi}^3 \Phi[0,\mu^*](\varphi^*,\varphi^*,\varphi^*), \varphi^*\right\rangle_{L^2(\s)}}{\langle \Diff_{\varphi \mu}^2 \Fop_\text{\tiny W}[0,\mu^*]\varphi^*, \varphi^*  \rangle_{L^2(\s)}}.
\end{equation}
Since $\Diff_{\varphi \mu}^2 \Fop_\text{\tiny W}[0,\mu^*] = \id$ we find that the denominator is of unit size. One calculates
\begin{align*}
&\Diff_{\varphi} \Phi[\varphi,\mu]\varphi^* = \Piop \Diff_\varphi \Fop_\text{\tiny W} [\varphi + \psi(\varphi,\mu),\mu](\varphi^* + \Diff_\varphi \psi(\varphi,\mu)\varphi^*),\\[11pt] 
&\Diff_{\varphi\varphi}^2 \Phi[\varphi,\mu](\varphi^*,\varphi^*)\\ 
&= \Piop \Diff_{\varphi\varphi}^2 \Fop_\text{\tiny W} [\varphi + \psi(\varphi,\mu),\mu](\varphi^* + \Diff_\varphi \psi[\varphi,\mu]\varphi^*,\varphi^* + \Diff_\varphi \psi[\varphi,\mu]\varphi^*)\\
&\quad+\Piop  \Diff_\varphi \Fop_\text{\tiny W} [\varphi + \psi(\varphi,\mu),\mu]\Diff_{\varphi\varphi}^2 \psi[\varphi,\mu](\varphi^*,\varphi^*),
\intertext{ and, in view of that $\Fop_\text{\tiny W}$ is quadratic in $\varphi$,}  
&\Diff_{\varphi\varphi\varphi}^3 \Phi[\varphi,\mu](\varphi^*,\varphi^*,\varphi^*)\\ 
&= 3 \Piop \Diff_{\varphi\varphi}^2 \Fop_\text{\tiny W} [\varphi + \psi(\varphi,\mu),\mu](\varphi^* + \Diff_\varphi \psi[\varphi,\mu]\varphi^*,\Diff_{\varphi\varphi}^2 \psi[\varphi,\mu](\varphi^*,\varphi^*))\\
&\quad+\Piop  \Diff_\varphi \Fop_\text{\tiny W} [\varphi + \psi(\varphi,\mu),\mu]\Diff_{\varphi\varphi\varphi}^3 \psi[\varphi,\mu](\varphi^*,\varphi^*,\varphi^*).
\end{align*}
Using the form of $\Diff_\varphi \Fop_\text{\tiny W}$ together with that $\psi(0,\mu^*) = \Diff_\varphi \psi[0,\mu^*]\varphi^* = 0$ one finds that
\begin{equation}\label{eq:Dphi3}
\begin{aligned}
\Diff_{\varphi\varphi\varphi}^3 \Phi[0,\mu^*](\varphi^*,\varphi^*,\varphi^*)
&= \Piop\, \left( \mu^* \id - \Lop_\text{\tiny W} \right) \Diff_{\varphi\varphi\varphi}^3 \psi[0,\mu^*](\varphi^*,\varphi^*,\varphi^*)\\
&\quad-6 \Piop\, \varphi^* \Diff_{\varphi\varphi}^2 \psi[0,\mu^*](\varphi^*,\varphi^*).
\end{aligned}
\end{equation}
We have $\range(\mu^* \id - \Lop_\text{\tiny W}) = M$, so that $\Piop\, \left( \mu^* \id - \Lop_\text{\tiny W} \right) =0$. We thus need to determine $\varphi^* \Diff_{\varphi\varphi}^2 \psi[0,\mu^*](\varphi^*,\varphi^*)$. Since $\Diff_\varphi \Fop_\text{\tiny{W}}[0,\mu^*] = \mu^* \id - \Lop_\text{\tiny W}$ is an isomorphism on $M$, it is possible (see again \cite[Section 1.6]{MR2004250}) to rewrite $\Diff_{\varphi\varphi}^2 \psi[0,\mu^*](\varphi^*,\varphi^*)$ as
\begin{equation}\label{eq:DDphipsi}
\begin{aligned}
&\Diff_{\varphi\varphi}^2 \psi[0,\mu^*](\varphi^*,\varphi^*)\\
&=- \left( \Diff_\varphi \Fop_\text{\tiny{W}}[0,\mu^*] \right)^{-1} (\id - \Piop ) \Diff_{\varphi\varphi} \Fop_\text{\tiny W}[0,\mu^*](\varphi^*,\varphi^*)\\
&=- \left( \Diff_\varphi \Fop_\text{\tiny{W}}[0,\mu^*] \right)^{-1} (\id - \Piop ) \big(-2 {\varphi^*}^2\big)\\
&= \left( \Diff_\varphi \Fop_\text{\tiny{W}}[0,\mu^*] \right)^{-1} ( 1 + \cos(2x))\\
&= \frac{1}{\mu^* -1} + \frac{\cos(2x)}{\mu^* - \sqrt{\tanh(2)/2}}.
\end{aligned}
\end{equation}
After multiplication with $\cos(x)$ this equals
\[
\frac{\cos(x)}{\mu^* -1} + \frac{\cos(x)}{2(\mu^* - \sqrt{\tanh(2)/2})} + \frac{\cos(3x)}{2(\mu^* - \sqrt{\tanh(2)/2})}.
\]
In view of~\eqref{eq:DDmu} and~\eqref{eq:Dphi3} the coefficient in front of $\cos{x}$ equals $\frac{1}{2} \ddot\mu(0)$. All taken into consideration, we obtain \eqref{eq:muasym} via a Maclaurin series, and one easily checks that $\ddot \mu(0) < 0$.

To prove~\eqref{eq:phiasym} one makes use of the formula 
\begin{equation}\label{eq:reductionformula}
\varphi(\varepsilon) = \varepsilon \varphi^* + \psi(\varepsilon\varphi^*,\mu(\varepsilon))
\end{equation} 
from the Lyapunov--Schmidt reduction (cf. Theorem~\ref{thm:reduction}). We already know that $\varphi(0) = 0$ and $\dot\varphi(0) = \cos(x)$, so it remains to calculate $\ddot \varphi(0)$. It follows from \eqref{eq:reductionformula} that
\begin{align*}
\ddot \varphi(\varepsilon) &= \Diff_{\varphi\varphi}^2 \psi[0,\mu^*](\varphi^*,\varphi^*) + 2 \Diff_{\varphi\mu}^2 \psi[0,\mu^*] (\varphi^*, \dot \mu(0))\\ 
&\quad+  \Diff_{\mu\mu}^2 \psi[0,\mu^*](\dot \mu(0),\dot \mu(0)) + \Diff_\mu \psi[0,\mu^*]\dot \mu(0).
\end{align*}
Since $\psi(0,\mu) \equiv 0$ where $\psi$ exists, we have $\Diff_\mu \psi(0,\mu^*) = 0$. Combining this with $\dot \mu(0) = 0$ one finds that
\[
\ddot \varphi(0) = \Diff_{\varphi\varphi}^2 \psi[0,\mu^*](\cos(x),\cos(x)),
\]
so that the proposition now follows from~\eqref{eq:DDphipsi}.
\end{proof}

\subsection*{\sc Properties along the bifurcation branch} 

\begin{theorem}[Uniform convergence]\label{thm:convergence}
Any sequence of Whitham solutions $(\varphi_n,\mu_n) \in S$ has a subsequence which converges uniformly to a solution $\varphi \in C^\alpha(\s)$, with $\alpha \in (0,\frac{1}{2})$ arbitrary. At any point where $\varphi < \frac{\mu}{2}$ the solution is $\alpha$-H\"older continuous for all $\alpha \in (0,1)$.
\end{theorem}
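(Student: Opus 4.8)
The plan is to derive uniform H\"older bounds on the family $\{\varphi_n\}$ with exponent approaching $\tfrac12$, extract a uniformly convergent subsequence by Arzel\`a--Ascoli, and check that the limit inherits the regularity and solves the equation by continuity. Since $\mu_n \in (0,1)$, Lemma~\ref{lemma:bound} yields a bound $\|\varphi_n\|_\infty \le 1 + \|\Lop_\text{\tiny W}\|_{\LL(L^\infty(\s))} =: R$ uniform in $n$. Throughout I would work from the resolved form~\eqref{eq:regularity}, writing $\varphi_n = \frac{\mu_n}{2} - \sqrt{g_n}$ with $g_n := \frac{\mu_n^2}{4} - \Lop_\text{\tiny W}\varphi_n = (\frac{\mu_n}{2}-\varphi_n)^2 \ge 0$; this is legitimate because $(\varphi_n,\mu_n) \in U$ forces $\varphi_n < \mu_n/2$, so the correct branch of the square root is the one shown.

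The engine is the elementary inequality $|\sqrt a - \sqrt b| \le \sqrt{|a-b|}$ for $a,b\ge 0$, combined with the order $-\tfrac12$ smoothing of $\Lop_\text{\tiny W}$. For the first step I would use that the Whitham kernel $K$ has an integrable $|x|^{-1/2}$-type singularity at the origin (see~\cite{EK08}), so that its $L^1$-modulus of continuity is $\bigOh(|h|^{1/2})$; as $\Lop_\text{\tiny W}$ is convolution with $K$, this gives $|\Lop_\text{\tiny W}\varphi_n(x)-\Lop_\text{\tiny W}\varphi_n(y)| \le C R\,|x-y|^{1/2}$, whence $|\varphi_n(x)-\varphi_n(y)| = |\sqrt{g_n(x)}-\sqrt{g_n(y)}| \le |\Lop_\text{\tiny W}\varphi_n(x)-\Lop_\text{\tiny W}\varphi_n(y)|^{1/2} \le (CR)^{1/2}|x-y|^{1/4}$, a uniform $C^{1/4}(\s)$ bound. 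From here I would bootstrap: if $\{\varphi_n\}$ is uniformly bounded in $C^{\beta}(\s)$ with $\beta \in (0,\tfrac12)$, then Lemma~\ref{lemma:besov} gives a uniform bound in $C^{\beta+1/2}(\s)$ for $\{\Lop_\text{\tiny W}\varphi_n\}$, hence for $\{g_n\}$, and the square-root inequality upgrades $\{\varphi_n\}$ to a uniform bound in $C^{(\beta+1/2)/2}(\s)$. The recursion $\beta \mapsto \tfrac{\beta}{2} + \tfrac14$ increases monotonically to its fixed point $\tfrac12$, so after finitely many steps $\{\varphi_n\}$ is uniformly bounded in $C^\alpha(\s)$ for any prescribed $\alpha < \tfrac12$. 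The exponents $\beta$ and $\beta+\tfrac12$ encountered are never integers, so Lemma~\ref{lemma:besov} applies directly, and the map never reaches $\tfrac12$, which is precisely the source of the restriction $\alpha<\tfrac12$.

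With a uniform $C^\alpha$ bound in hand, Arzel\`a--Ascoli produces a subsequence with $\varphi_{n_j} \to \varphi$ in $C(\s)$, and, passing to a further subsequence, $\mu_{n_j} \to \mu \in [0,1]$. Passing to the limit in $\mu_{n_j}\varphi_{n_j} - \Lop_\text{\tiny W}\varphi_{n_j} - \varphi_{n_j}^2 = 0$, using uniform convergence and continuity of $\Lop_\text{\tiny W}$ on $L^\infty(\s)$, shows that $\varphi$ solves the Whitham equation with speed $\mu$. Since the $C^\alpha$ bounds are uniform for every $\alpha<\tfrac12$ and H\"older seminorms are lower semicontinuous under uniform convergence, $\varphi \in C^\alpha(\s)$ for all $\alpha<\tfrac12$, giving the first assertion.

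For the second assertion I would argue locally. Globally $\varphi \in C^{\alpha}(\s)$ for all $\alpha<\tfrac12$, so a single application of Lemma~\ref{lemma:besov} yields $\Lop_\text{\tiny W}\varphi \in C^{1-\varepsilon}(\s)$, hence $g := \frac{\mu^2}{4} - \Lop_\text{\tiny W}\varphi \in C^{1-\varepsilon}(\s)$ for every $\varepsilon>0$. At a point $x_0$ with $\varphi(x_0)<\mu/2$ one has $g(x_0) = (\mu/2-\varphi(x_0))^2 > 0$, so by continuity $g$ is bounded away from $0$ on a neighborhood; there $t \mapsto \sqrt t$ is smooth, so $\sqrt g$ has the same H\"older regularity as $g$, and $\varphi = \mu/2 - \sqrt g \in C^{1-\varepsilon}$ locally for all $\varepsilon>0$. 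The hard part throughout is the square-root singularity at the boundary $\varphi = \mu/2$ of $U$: the quadratic nonlinearity forces solving for $\varphi$ through a square root, which regularizes by only one half-derivative exactly where its argument may vanish. This degeneracy both caps the global gain at $C^{<1/2}$—the limit may touch $\varphi=\mu/2$, corresponding to the conjectured cusped highest wave—and is what makes the uniform H\"older estimate, rather than a mere equicontinuity statement, require the bootstrapping iteration above.
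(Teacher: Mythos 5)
Your proof is correct, and while it runs on the same engine as the paper's---the resolved form~\eqref{eq:regularity}, the half-order smoothing of $\Lop_\text{\tiny W}$, and the recursion $\beta \mapsto \beta/2+1/4$ increasing to $1/2$---it is organized in a genuinely different way. The paper first establishes equicontinuity of $\{\varphi_n\}$ by citing the kernel argument from the proof of Theorem 4.1 in \cite{EK08}, applies Arzel\`a--Ascoli, and only then bootstraps the regularity of the \emph{limit} function, splitting at each stage into the nondegenerate case $2\varphi(x) \neq \mu$ and the degenerate case $\varphi(x) = \mu/2$, where it exploits $\Lop_\text{\tiny W}\varphi(x) = \mu^2/4$. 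You instead bootstrap \emph{uniformly along the sequence}: the elementary inequality $|\sqrt a - \sqrt b| \le \sqrt{|a-b|}$ (applicable since $g_n = (\mu_n/2-\varphi_n)^2 \ge 0$) absorbs both of the paper's cases into a single global seminorm estimate, so equicontinuity---indeed a uniform $C^\alpha$ bound for every $\alpha<\tfrac12$---falls out of the iteration, and the limit inherits the regularity by lower semicontinuity of H\"older seminorms under uniform convergence. What your route buys: it is self-contained (no appeal to the equicontinuity result of \cite{EK08}), it yields uniform H\"older bounds on the whole family rather than mere equicontinuity, it makes explicit the (omitted in the paper) verification that the limit actually solves the equation, and your one-shot local argument for the second assertion (where $g$ is bounded away from zero, $\sqrt{\cdot}$ is Lipschitz, so $\varphi$ inherits the $C^{1-\varepsilon}$ regularity of $\Lop_\text{\tiny W}\varphi$) avoids re-running the iteration. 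What it costs: you need a seed estimate to start the iteration, for which you invoke the $\bigOh(|h|^{1/2})$ $L^1$-modulus of continuity of $K$; that is legitimate (it follows from the $|x|^{-1/2}$ singularity together with the smoothness and decay of $K$ established in \cite{EK08}), and it is the same kernel information underlying the paper's citation, so nothing extraneous is being assumed. Two points you leave implicit and should state: the limit satisfies $\varphi \le \mu/2$ (as a pointwise limit of functions with $\varphi_n < \mu_n/2$), which is what licenses writing $\varphi = \mu/2 - \sqrt g$ for the limit in your final step; and passing to the limit in the equation uses boundedness of $\Lop_\text{\tiny W}$ on $C(\s)$, which holds because $K \in L^1$. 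Both are one-line observations, so these are presentational remarks rather than gaps.
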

\begin{proof}
Since $\mu_n \in (0,1)$, Lemma~\ref{lemma:bound} implies that $\varphi_n$ is uniformly bounded in $C(\s)$. The proof of Theorem 4.1 in~\cite{EK08} shows that any uniformly bounded sequence of Whitham solutions (i.e. in $L_\infty(\R)$) is equicontinuous; this follows from a standard argument using the integral kernel of $\Lop_\text{\tiny W}$. Since we are dealing with periodic solutions, we may apply the Arzola--Ascoli lemma to conclude that a subsequence of $\varphi_n$ converges in $C(\s)$.   

Since $\Lop_\text{\tiny W}$ maps $C(\s)$ into $C^{1/2}(\s)$ (cf. Lemma~\ref{lemma:besov}) we 
see from~\eqref{eq:regularity} that $\varphi \in C^{1/2}(\s)$ wherever $2\varphi \neq \mu$. On the other hand, when $\varphi(x) = \frac{\mu}{2}$ we have that
\begin{align*}
|\varphi(x) - \varphi(y)| &= \frac{\mu}{2} - \varphi(y) = \left( \frac{\mu^2}{4} - \Lop_\text{\tiny{W}} \varphi(y) \right)^{1/2}\\ 
&= \left( \Lop_\text{\tiny{W}} \varphi(x) - \Lop_\text{\tiny{W}} \varphi(y) \right)^{1/2} \leq C |x-y|^{1/4},
\end{align*}
in view of that $\Lop_\text{\tiny W} \varphi$ is $\alpha$-H\"older continuous for $\alpha = \frac{1}{2}$. Hence $\varphi \in C^{1/4}(\s)$. Repeating this argument once shows that $\varphi \in C^{3/4}$ wherever $2\varphi \neq \mu$, and $\varphi \in C^{3/8}(\s)$. Bootstrapping yields H\"older continuity below $\alpha = 1$ for $2\varphi \neq \mu$, and below $\alpha = \frac{1}{2}$ for $\varphi \in C^\alpha(\s)$.  
\end{proof}

\begin{proposition}[Characterization of blow-up]
Alternative (i) in Theorem~\ref{thm:global_whit} can happen only if 
\begin{equation}\label{eq:blow-up}
\liminf_{\varepsilon \to \infty} \, \inf_{x\in \R} \left( \frac{\mu(\varepsilon)}{2} - \varphi(x;\varepsilon) \right) = 0. 
\end{equation}
In particular, alternative~(i) implies alternative~(ii).
\end{proposition}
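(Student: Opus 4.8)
The plan is to argue by contraposition: I would show that if
\(\liminf_{\varepsilon\to\infty}\inf_{x\in\R}\bigl(\tfrac{\mu(\varepsilon)}{2}-\varphi(x;\varepsilon)\bigr)>0\),
then \(\|\varphi(\varepsilon)\|_{C^\alpha(\s)}\) remains bounded, so that alternative~(i) is impossible. Two pieces of a priori information on the branch drive everything. First, since \(\mu(\varepsilon)\in(0,1)\), Lemma~\ref{lemma:bound} gives the uniform bound \(\|\varphi(\varepsilon)\|_\infty\le 1+\|\Lop_\text{\tiny W}\|_{\LL(L^\infty(\s))}\); the branch is thus equibounded in \(C(\s)\). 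Second, squaring the fixed-point form~\eqref{eq:regularity} yields the pointwise identity \(\tfrac{\mu^2}{4}-\Lop_\text{\tiny W}\varphi=\bigl(\tfrac{\mu}{2}-\varphi\bigr)^2\), so that a positive lower bound on \(\tfrac{\mu}{2}-\varphi\) is precisely a lower bound keeping the radicand in~\eqref{eq:regularity} away from zero.

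Concretely, suppose \(\inf_x\bigl(\tfrac{\mu(\varepsilon)}{2}-\varphi(x;\varepsilon)\bigr)\ge\delta>0\) for all large \(\varepsilon\), and put \(g:=\tfrac{\mu^2}{4}-\Lop_\text{\tiny W}\varphi\), so that \(g\ge\delta^2\) uniformly. I would then bootstrap the regularity in two steps. Since \(\varphi\) is equibounded in \(C(\s)\), Lemma~\ref{lemma:besov} (with target index \(s=\tfrac12\notin\Z\)) shows \(\Lop_\text{\tiny W}\varphi\), hence \(g\), is equibounded in \(C^{1/2}(\s)\); as \(t\mapsto\sqrt t\) is Lipschitz on \([\delta^2,\infty)\) with constant \(\tfrac1{2\delta}\), the composition \(g^{1/2}\) stays bounded in \(C^{1/2}(\s)\), and with it \(\varphi=\tfrac{\mu}{2}-g^{1/2}\). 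For the second step I would use \(\alpha<1\), which gives the embedding \(C^{1/2}(\s)\hookrightarrow C^{\alpha-1/2}(\s)\), together with the mapping property \(\Lop_\text{\tiny W}\colon C^{\alpha-1/2}(\s)\to C^{\alpha}(\s)\) from Lemma~\ref{lemma:besov} (valid since \(\alpha-\tfrac12\) and \(\alpha\) are non-integral). Repeating the square-root estimate, now in \(C^\alpha(\s)\), produces a uniform bound on \(\|\varphi(\varepsilon)\|_{C^\alpha(\s)}\), contradicting~(i) and establishing~\eqref{eq:blow-up}.

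The final clause is then immediate: under alternative~(i), \eqref{eq:blow-up} forces a sequence \(\varepsilon_n\to\infty\) along which \(\sup_x\varphi(\cdot;\varepsilon_n)-\tfrac{\mu(\varepsilon_n)}{2}\to0\), so the pairs \((\varphi(\varepsilon_n),\mu(\varepsilon_n))\) accumulate on the set \(\{\sup_x\varphi=\mu/2\}\subset\partial U\), which is exactly the statement that they approach the boundary of \(S\)---alternative~(ii).

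The main obstacle is the regularity loss concealed in the square root. The composition \(g\mapsto g^{1/2}\) preserves H\"older regularity only while \(g\) is bounded away from \(0\), and the constants degenerate like \(\delta^{-1}\) as the lower bound \(\delta\to0\); this is exactly why the conclusion must be the conditional dichotomy~\eqref{eq:blow-up} rather than an unconditional bound ruling out~(i). A secondary care is that Lemma~\ref{lemma:besov} is available only for non-integral smoothness indices, so the bootstrap has to be routed through \(C^{\alpha-1/2}\) onto \(C^\alpha\) rather than through the borderline space \(C^1\).
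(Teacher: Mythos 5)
Your proposal is correct and follows essentially the same route as the paper's own proof: arguing by contraposition, invoking Lemma~\ref{lemma:bound} for the uniform bound in $C(\s)$, and bootstrapping in two steps $C(\s) \to C^{1/2}(\s) \to C^{\alpha}(\s)$ via the smoothing property of $\Lop_\text{\tiny W}$ from Lemma~\ref{lemma:besov}, with constants degenerating like $\delta^{-2}$. The only cosmetic difference is that you run the estimate through the Lipschitz property of $t \mapsto \sqrt{t}$ on $[\delta^2,\infty)$ applied to the fixed-point form \eqref{eq:regularity}, whereas the paper divides the difference quotient of the equation by $\mu - \varphi(x) - \varphi(y) \geq 2\delta$; since $\sqrt{g(x)}-\sqrt{g(y)} = \bigl(g(x)-g(y)\bigr)/\bigl(\sqrt{g(x)}+\sqrt{g(y)}\bigr)$, these are algebraically the same step.
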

\begin{proof}
Assume that $\liminf_{\varepsilon \to \infty} \inf_x \left( \frac{\mu(\varepsilon)}{2} -  \varphi(x;\varepsilon) \right) \geq  \delta$, for some $\delta > 0$. Any such solution of the Whitham equation satisfies
\[
|\varphi(x) - \varphi(y)| = \frac{|\Lop_\text{\tiny W} \varphi(x) - \Lop_\text{\tiny W} \varphi (y)|}{\mu - \varphi(x) - \varphi(y)} \leq \frac{|\Lop_\text{\tiny W} \varphi(x) - \Lop_\text{\tiny W} \varphi (y)|}{2\delta}. 
\]
Since $\Lop_\text{\tiny W}$ is continuous $C(\s) \to C^{1/2}(\s)$ and the family $\{\varphi(\varepsilon)\}_\varepsilon$ is uniformly bounded in $C(\s)$ (cf. Lemma~\ref{lemma:bound}), it follows that $\{\varphi(\varepsilon)\}_\varepsilon$ is uniformly bounded in $C^{1/2}(\s)$ too. Repeating the argument for $\Lop_\text{\tiny W}$ as a continuous operator $C^{1/2}(\s) \to C^\alpha(\s)$, $\alpha < 1$, yields that 
\[
\| \varphi(\varepsilon) \|_{C^{\alpha}(\s)} \leq C \delta^{-2}, \qquad \alpha \in (0,1),
\]
for some constant $C$ depending only $\Lop_\text{\tiny W}$ (recall that $\mu$ is bounded by assumption). This shows that $\| \varphi(\varepsilon)\|_{C^{\alpha}(\s)} \to \infty$ is possible only if \eqref{eq:blow-up} holds.
\end{proof}

\subsection*{Summary} In Section~\ref{sec:global} we have shown that each of the subcritical bifurcation branches in Corollary~\ref{cor:local_whit} (i) can be extended to a global bifurcation branch in the set \(S\) of solution pairs \((\varphi,\mu) 
      \in C^\alpha_\text{even}(\s) \times (0,1)\) satisfying \(\varphi  < \mu/2\). These solutions are all smooth. Each bifurcation curve contains a sequence of solutions which converge in \(C^0\) to a solution of H\"older regularity \(\alpha \in (0,\frac{1}{2})\), with the better pointwise regularity \(\alpha \in (0,1)\) at any point \(x\) where \(\varphi(x) < \mu/2\). If the \(C^\alpha\)-norm, \(\alpha > \frac{1}{2}\), blows up along the bifurcation branch, the corresponding sequence of solutions approaches \(\varphi = \frac{\mu}{2}\) in the sense of \eqref{eq:blow-up}. In spite of this---and in spite of the numerical evidence available---an analytical proof of a 'highest', cusped, limiting wave at the end of each bifurcation branch remains an open problem.

\section{Spectral schemes for the Whitham equation}
\label{sec:spectral}
In this section, the numerical approximation of
solutions of \eqref{eq:whit} is in view. Both for the approximation of traveling waves and
for the discretization of the evolution problem, spectral schemes are used. This is a natural
choice since the operator defining the linear part of the equations is given in terms
of Fourier multipliers.

\subsection*{Traveling waves}
Solutions of  \eqref{eq:canonical} are approximated
using a spectral cosine collocation method.
To define the cosine-collocation projection,
first define the subspace
\[
S_N = \realspan \left\{ 
\cos(lx)\ \Big| \  0 \le l  \le N-1 \right\}
\]
of $L^2(0, \pi)$, and
the collocation points
$x_n = \pi\frac{2n-1}{N}$ for $n=1,...,N$.
The discretization is defined by seeking $\phi_N$ 
in $S_N$ satisfying the equation
\begin{equation}\label{steady:colloc} 
- \mu \, \phi_N + \phi_N^2 + K^N \phi_N =0,
\end{equation}
where the operator $K^N$ the discrete form of $K$ defined 
in \eqref{eq:K1}.
The discrete cosine representation of $\phi_N$ given by
\[
\phi_N(x) = \sum_{l=0}^{N-1} w(l) \Phi_N(l) \cos(lx),
\]
where 
\[
w(l) = \left\{ \begin{array}{cc} \sqrt{1/N}, \ \ & l = 0, \\
                                 \sqrt{2/N}, \ \ & l \ge 1,   
                \end{array} \right.
\]
and $\Phi_N(l)$ are the discrete cosine coefficients, given by
\[
\Phi_N(l) = w(l) \sum_{n=0}^{N-1}\phi_N(x_n) \cos(lx_n), \quad \mbox{ for } l=0,...,N-1.
\]
Now if the equation \eqref{steady:colloc}
is enforced at the collocation points $x_n$, the term
$K^N \phi_N$ may be practically evaluated 
with the help of the matrix $\left[ K^N \right](m,n)$
by
\[
\left[ K^N\right] \phi_N (x_m) 
= \sum_{n=1}^{N} \left[ K^N \right](m,n) \phi_N(x_n),
\]
where the matrix $\left[K^N \right](m,n)$
is defined by
\[
 \left[ K^N\right](m,n)
=
\sum_{l =0 }^{N-1} w^2(k)
\sqrt{   {\textstyle \frac{1}{l}} \tanh{l}} \ 
\cos(l x_n) \cos(l x_m).
\]
Thus, equation \eqref{steady:colloc} enforced at the collocation 
points $x_n$ yields a system of $N$ nonlinear equations, 
which can be efficiently solved using a Newton method. 
The cosine expansion
effectively removes the singularities of the Jacobian matrix
due to translational invariance and symmetry of the solutions.
The nondimensional speed $\mu$ is used as the bifurcation parameter,
but as the turning point is approached, it is more convenient to
change to a waveheight parametrization. 
The computation can be started for $\mu$ close to but smaller
than the critical speed $\mu_k$, 
and with an initial guess for the 
first computation given by comparing \eqref{eq:phiasym} and \eqref{eq:muasym}.
\begin{figure}
  \begin{center}
    \includegraphics[width=.49\textwidth]{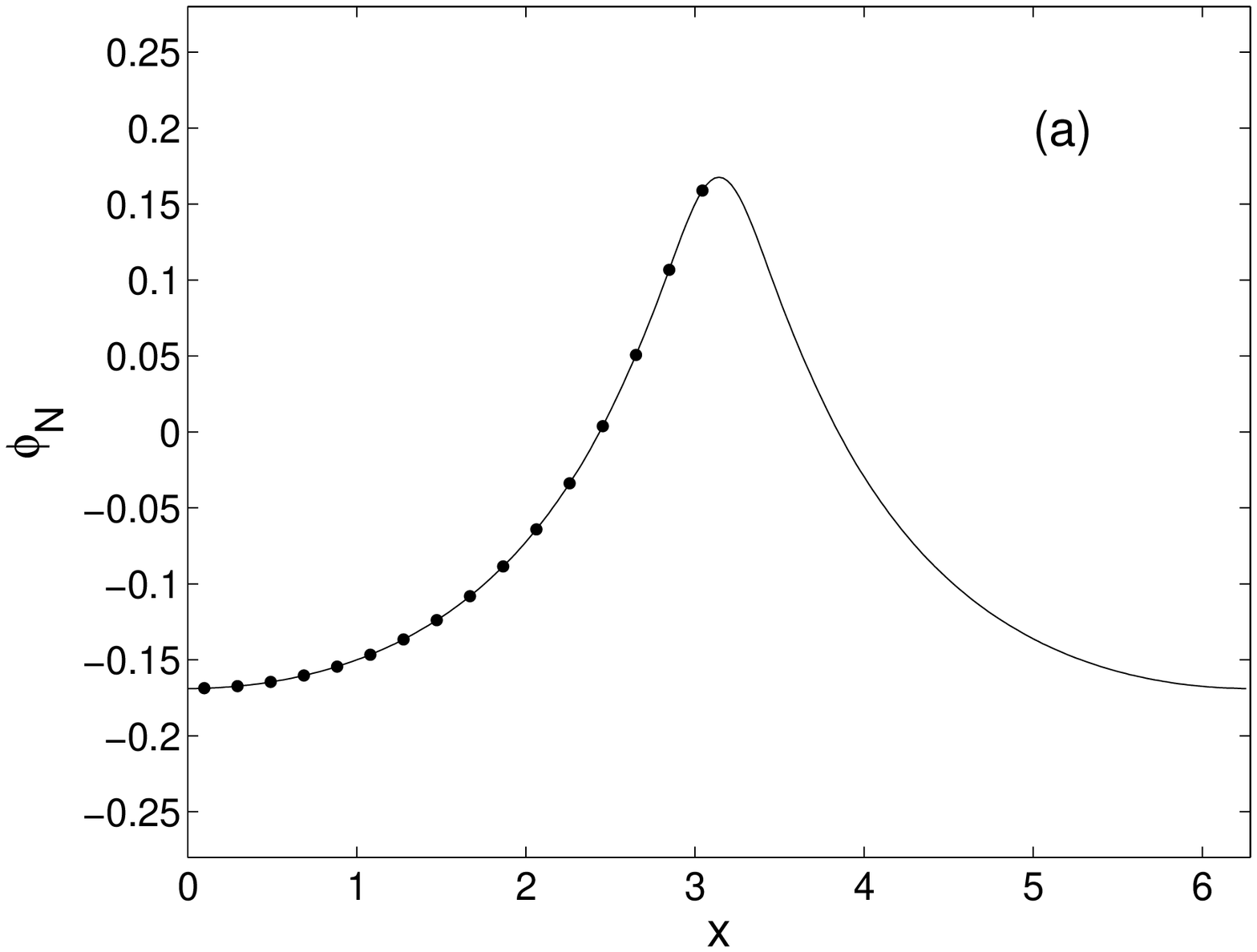}
    \includegraphics[width=.49\textwidth]{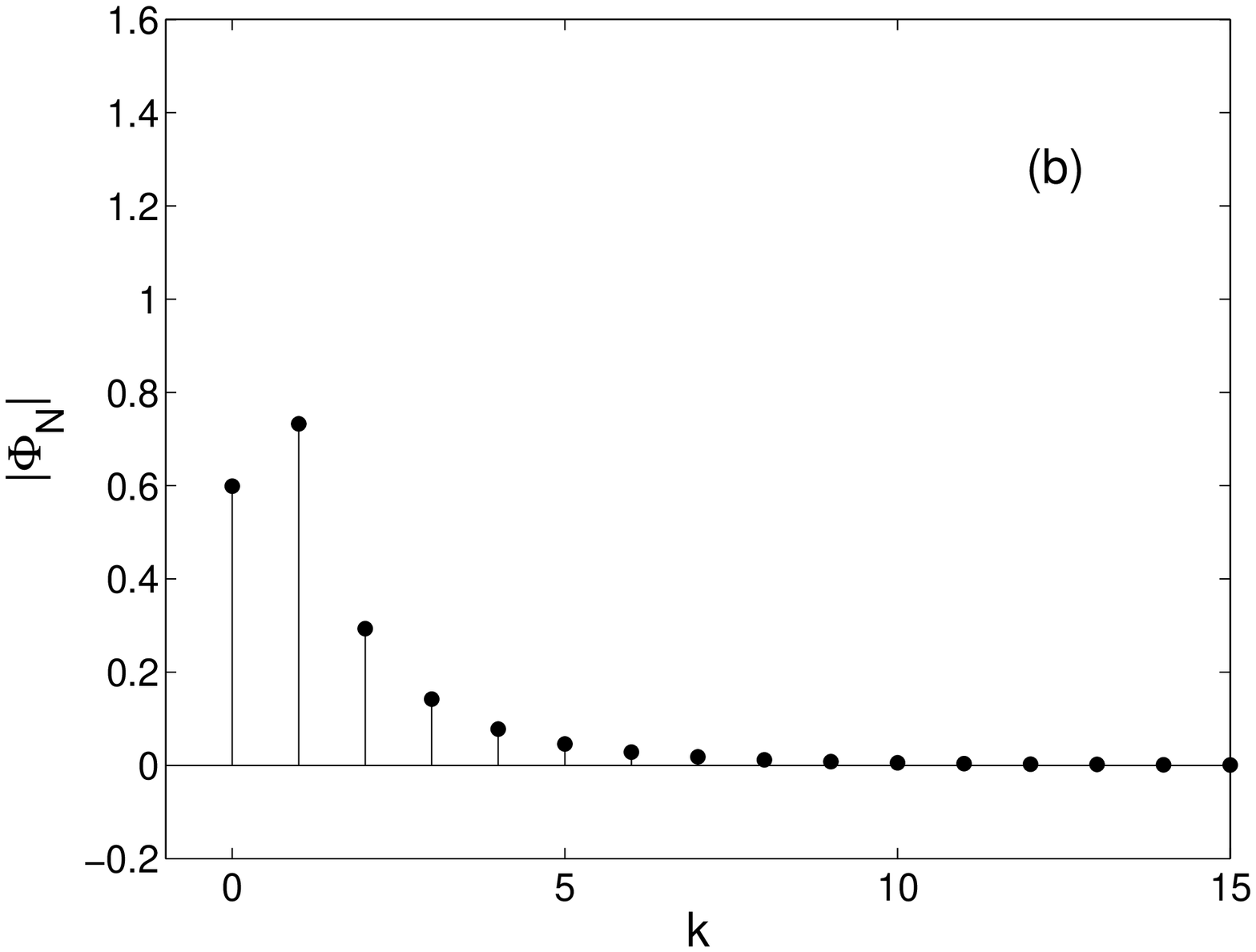}
  \end{center}
  \caption{\small An approximate traveling-wave solution $\phi_N$ with nondimensional wavespeed 
    $\mu = 0.789$, wavelength $2 \pi$, and nondimensional waveheight $0.3368$
    is shown in the left panel.
    The collocation approximation used $16$ gridpoints, which are indicated as dots in the graph.
    The amplitudes of the discrete Fourier modes are shown on the right.}
\label{fig:1}
\end{figure}
\begin{figure}
  \begin{center}
    \includegraphics[width=.49\textwidth]{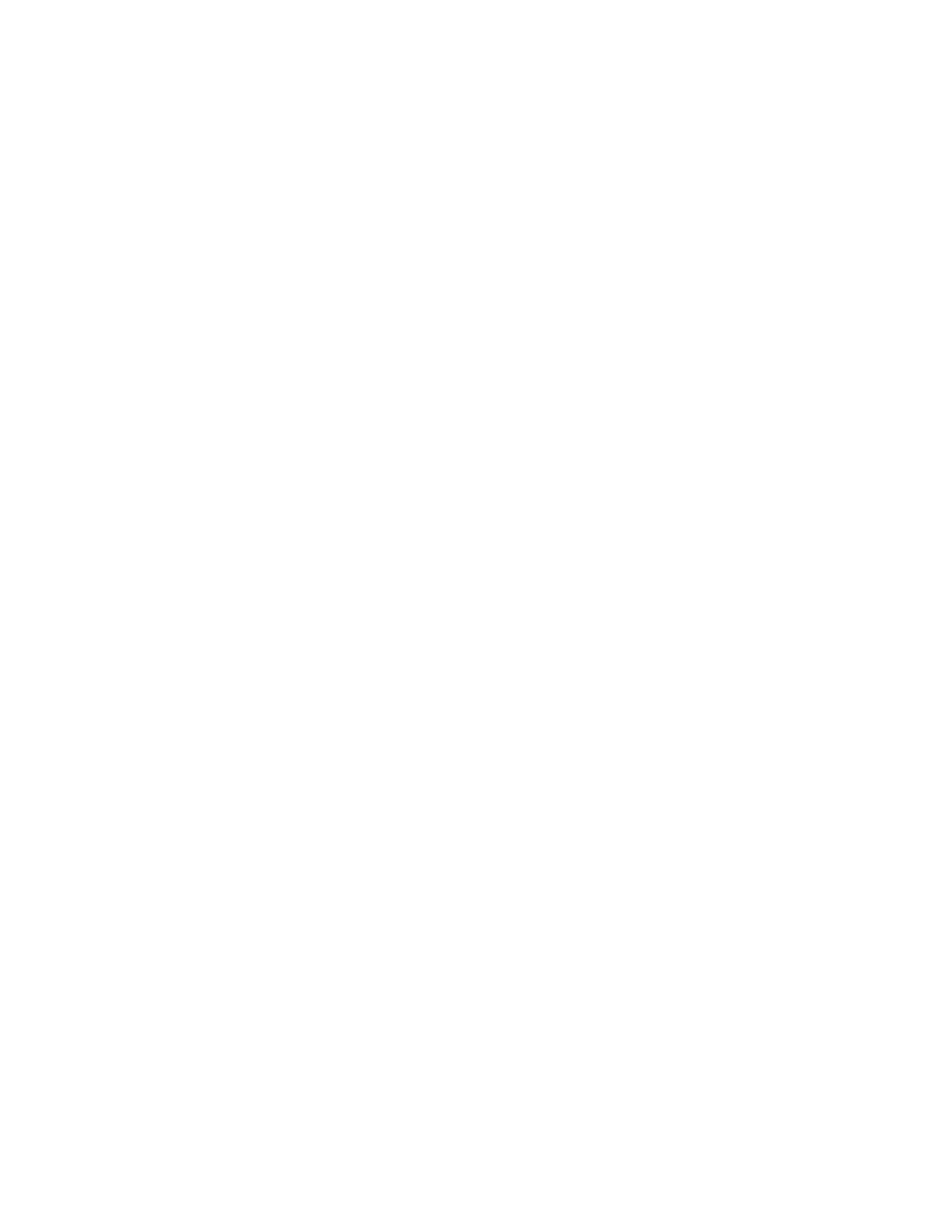}
    \includegraphics[width=.49\textwidth]{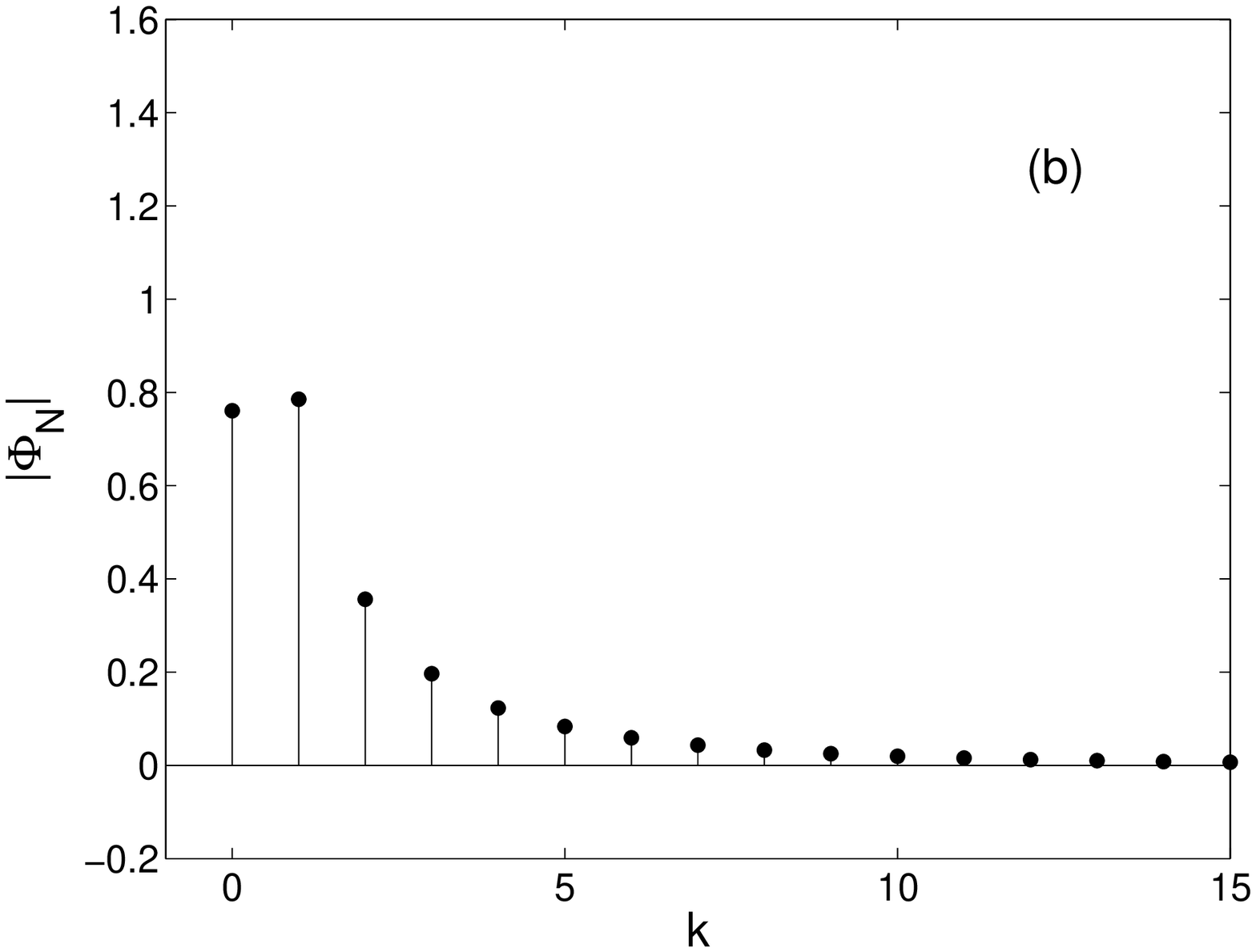}
  \end{center}
  \caption{\small An approximate  traveling-wave solution $\phi_N$ with 
    nondimensional wavespeed $\mu = 0.7715$, wavelength $2\pi$,
    and nondimensional waveheight $ 0.4152$.
    The collocation approximation used $64$ gridpoints, 
    and the amplitudes of the discrete Fourier modes are shown on the right.}
\label{fig:2}
\end{figure}

In Figure \ref{fig:1}, a periodic traveling wave with speed $\mu=0.789$ is shown.
In panel (a), the function is drawn on the interval $[0,2 \pi]$,
and the collocation values are indicated with dots in the curve.
In panel (b), the values of the discrete cosine coefficients
are shown. This computation was done with only $16$ modes.
In Figure \ref{fig:2}, a periodic traveling wave with speed $\mu=0.7715$ is shown.
It appears that the wave is steeper, and higher cosine coefficients
are important. The computation was done with $64$ modes.
To make sure that the computed functions are approximate traveling
waves for the Whitham equation, we have also used a dynamic integrator
for the time-dependent Whitham equation, as shown in the next subsection.

\subsection*{\sc Time-dependent discretization}
%
For the purpose of approximating time-dependent periodic solutions of \eqref{eq:whit}, 
a Fourier method is optimal.
To define the Fourier-collocation projection,
the subspace
\begin{equation}
\Sn = \compspan \left\{ 
\exp(ikx)\ \Big| \  k \in {\mathbb Z}, \  - N/2 \le k \le N/2 -1 \right\}
\end{equation}
of $L^2(0, 2\pi)$ is used.
The collocation points are defined to be
$x_j = \frac{2 \pi j}{N}$ for $j=0,1,...,N-1$.
Note that $N$ here is in general different from the $N$
used in the previous section.
Let $P_N$ be the projection operator onto $\Sn$, 
and let $I_N$ be the interpolation operator 
from $C^{\infty}_{per}([0,2 \pi])$ onto $\Sn$.
As explained in \cite{MR1874071,0658.76001}, 
this operator is defined for a
given $u \in C^{\infty}_{per}([0,2 \pi])$,
by letting $I_N u$ be the unique element of $\Sn$ that coincides
with $u$ at the collocation points $x_j$.
The same scaling as in the steady case is used, so that the non-dimensional form
of the equation is
\begin{equation}
\label{eq:dynamic}
\eta_t + 2 \eta \eta_x + K \ast \eta_x = 0.
\end{equation}
The spatial discretization is then defined by the following problem.
Find a function $\eta_N:[0,T] \rightarrow \Sn$,
such that
\begin{equation} \label{colloc}
\left\{
\begin{array}{ll}
\partial_t \eta_N + \partial_x I_N ( \eta_N^2)  
                 + \mathcal{K}^N \! * \partial_x \eta_N 
                 = 0, & x \in [0,2\pi],\\
 \eta_N(\cdot,0) = P_N \eta_0, & \\
\end{array}
\right.
\end{equation}
where $\eta_0$ is the initial data.
If $\eta_N(\cdot,t)$ is written in terms of its discrete Fourier coefficients
$\tilde{\eta}_N(k)$ as
\[
\eta_N(x) = \sum_{-N/2 \le k \le N/2 -1 }\tilde{\eta}_N(k)
\exp(ikx),
\]
the operator $\mathcal{K}^N$ can be evaluated using
the formula
\[
\mathcal{K}^N \eta_N (x) = 
\tilde{\eta}_N(0) \ +  
\sum_{\substack{1-N/2 \le k \le N/2 -1 \\ k \ne 0 }}  
  \sqrt{   {\textstyle \frac{1}{k}} \tanh{k}} \ \tilde{\eta}_N(k) \exp(ikx).
\]
Thus the operator  $\mathcal{K}^N$ is 
the truncation at the $N/2$-st Fourier mode of the 
operator given by the periodic convolution with $K$.
Note that this formulation includes the truncation
of the Fourier mode $\tilde{\phi}_N(-N/2)$ which ensures
that the solution stays real, and which
otherwise can lead to instabilities in the computation.
The time integration of \eqref{eq:dynamic} may be carried out in a number of ways.
We have chosen to use a midpoint method for both the
linear and the nonlinear term.
This method can be motivated as follows. Suppose the discrete solution
is known at the time $t$. Then by the fundamental theorem of calculus,
it is clear that
$$
\eta_N(x_j, t+\delta t) = \eta_N(x_j,t) + \int_t^{t+\delta t} \partial_t \eta(x_j,s) \, ds.
$$
Now using the equation \eqref{colloc} 
and approximating the integral with help of the trapezoidal rule yields
\begin{multline*}
\eta_N(x_j, t+\delta t) = \eta_N(x_j,t)
     + \frac{\delta t}{2} \left\{ 
       \partial_x I_N \big( \eta_N^2(x_j,t) + \eta_N^2(x_j, t+\delta t)\big) 
                                                                             \right\} \\
          + \frac{\delta t}{2} \left\{ \mathcal{K}^N \eta_N(x_j,t) 
                     + \mathcal{K}^N  \eta_N (x_j, t+\delta t)
\right\} + 
\mathcal{O}(\delta t^3).
\end{multline*}
From this equation, one may easily construct a fully discrete approximation.
This can be done either in physical space, or in Fourier space. Constructing
the approximation in Fourier space appears to be more convenient for the
time-dependent problem. Since nonlinear terms are involved, the method
is in effect pseudo-spectral, that is, derivatives and integral operators
are evaluated in Fourier space, while nonlinear terms are taken in physical
variables. In practice, the term $\eta_N^2(x_j, t+\delta t)$ is approximated
by $\eta_N^2(x_j, t)$, and the equation is solved for  $\eta_N(x_j, t+\delta t)$.
Then this is used as a new approximation for the square term.
This procedure is iterated until the change in the result falls below a certain
tolerance. This scheme is second-order in time, so that a relatively small time
step has to be used. Of course, this also means that the number of iterations
at each time step is small, typically less than three.  
%
%
%
\begin{table}
\caption{\small Error in evolution code after 5 and 50 periods for the traveling wave
           shown in Figure \ref{fig:1}.}   
\begin{center}
%
%
\begin{tabular}[c]{c c c c c c c c c c c} 
 \hline
 N & & $\delta t$ & & $L^2$-error & &  $|u|_{\infty} - |u_N|_{\infty}$  & & $L^2$-error & & $|u|_{\infty} - |u_N|_{\infty}$ \\
  \hline
  $2^{5}$ & & $2^{-10}$ & & 2.675e-04  & & 2.838e-05 & &  9.635e-04 & & 2.612e-05 \\
  $2^{6}$ & & $2^{-14}$ & & 2.644e-05  & & 1.876e-06 & &  5.299e-05 & & 1.587e-06 \\
  $2^{7}$ & & $2^{-19}$ & & 1.021e-06  & & 5.876e-08 & &  4.489e-07 & & 5.286e-08    \\
  \hline
   & &    & &  \multicolumn{3}{c}{$1$ period} & & \multicolumn{3}{c}{$100$ periods} \\
  \hline
\end{tabular}    
\end{center}
\label{tab:1}       
\end{table}
\begin{figure}[h]
  \begin{center}
    \includegraphics[width=.49\textwidth]{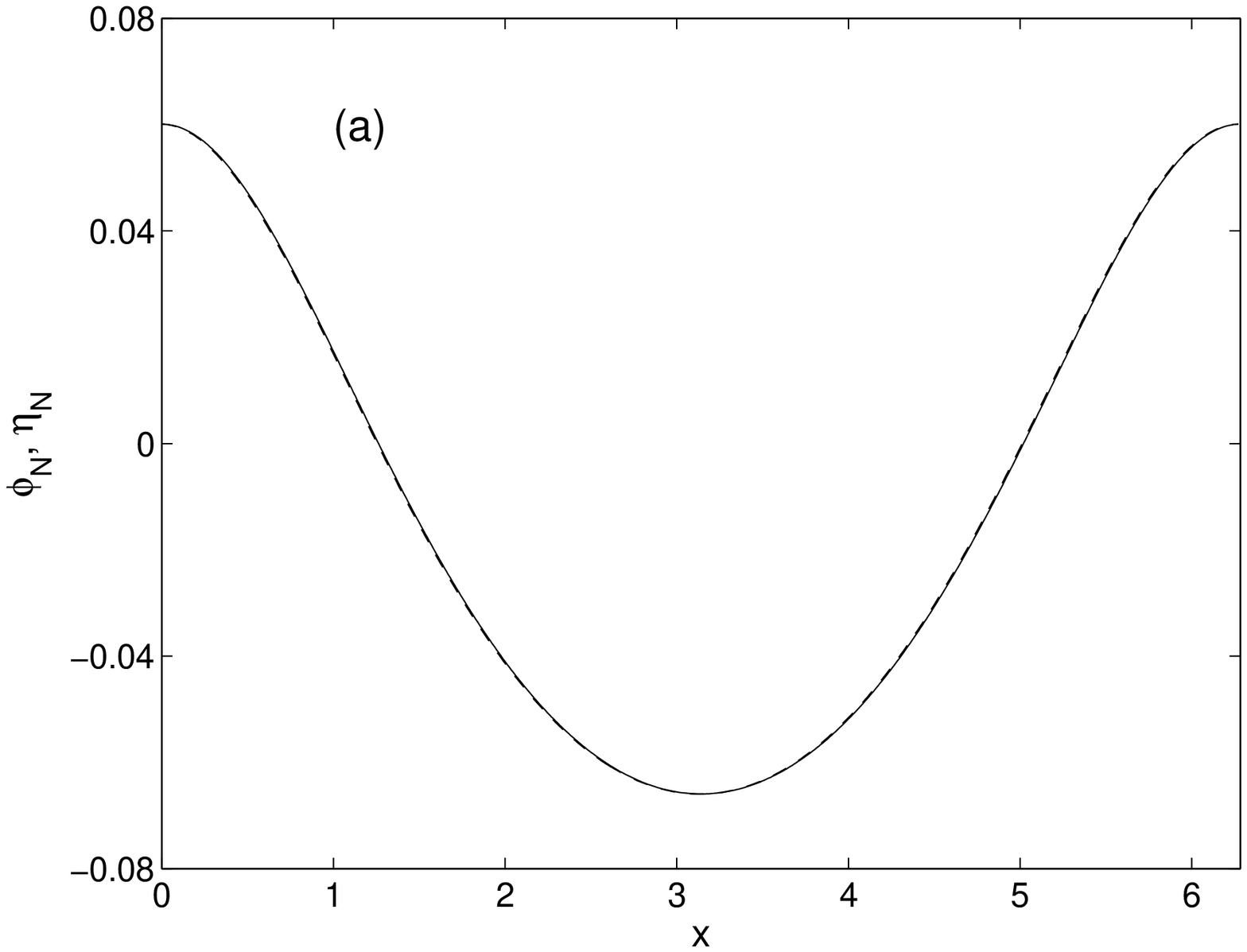}
    \includegraphics[width=.49\textwidth]{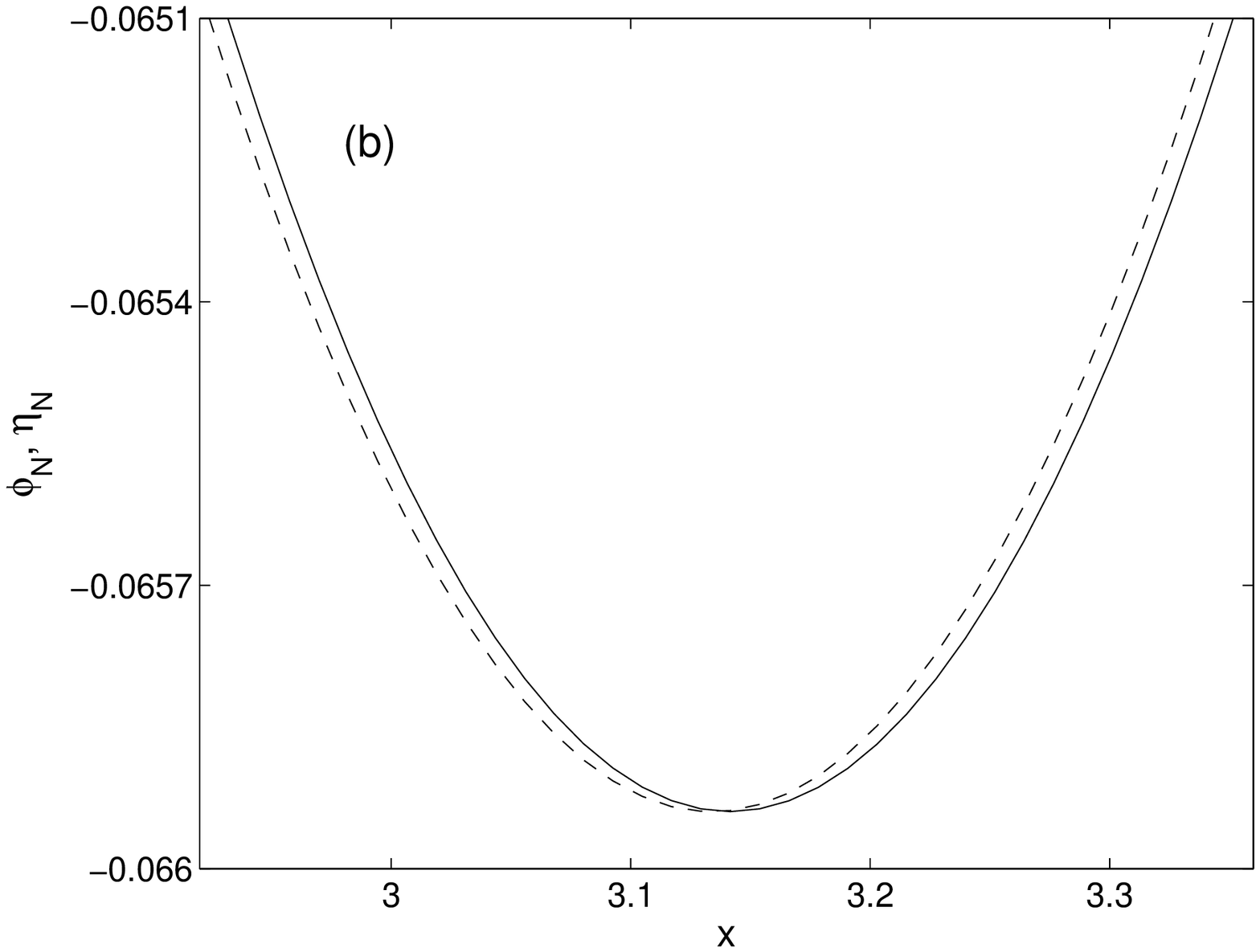}
  \end{center}
  \caption{\small Solid line: approximate traveling wave $\phi_N$ for the Whitham equation
            $\mu=0.8615$. Dashed line: $\eta_N$ after time integration using \eqref{colloc} 
            for $10000$ periods.
            In (a), the difference between $\phi_N$ and $\eta_N$ is hardly visible.
            In this computation, $N=512$ and $\delta t=5e-04$. The $L^2$ error was 
            $2.1e-03$,
            while the difference in height was $2.24e-06$. 
            This and the magnification (b) suggests that the
            the error is mainly due to a phase shift.}
\label{fig:3}
\end{figure}
In Table 1, we record the numerical errors incurred by the
time integration of an approximate traveling wave with 
velocity $\mu=0.8615$ propagating for $1$ and $100$ periods.
To find the most advantageous combinations of the number
of Fourier modes $N$ and the time step $\delta t$, we used a computation
for one period. We then use this combination, 
and integrated for $100$ periods.
The discrete $L^2$-error, the difference in maximal height
between the original wave, and the profile after $1$ and $100$
periods were computed. 
As can be seen, the error drops when increasing $N$
or when decreasing $h$. Moreover, the fact that the difference in
maximal height is generally smaller than the $L^2$-error
suggests that the error incurred during the time evolution
is mostly due to a phase shift of the solution.
This can also be observed in Figure \ref{fig:3}, where the same
traveling wave is shown after time integration for $10000$
periods. These results also suggest that the traveling
waves are orbitally stable, but no special investigation
of this question has been carried out.

\section{Numerical results}
\label{sec:results}
In this section, we present a few numerical computations carried 
out using the discretizations described above. First, we will
present an in-depth study of some bifurcation branches of the
Whitham equation, and then the Whitham waves will be compared
to traveling-wave solutions of the KdV equation.

\subsection*{\sc The Whitham branch}
\begin{figure}[h]
  \begin{center}
    \includegraphics[width=.49\textwidth]{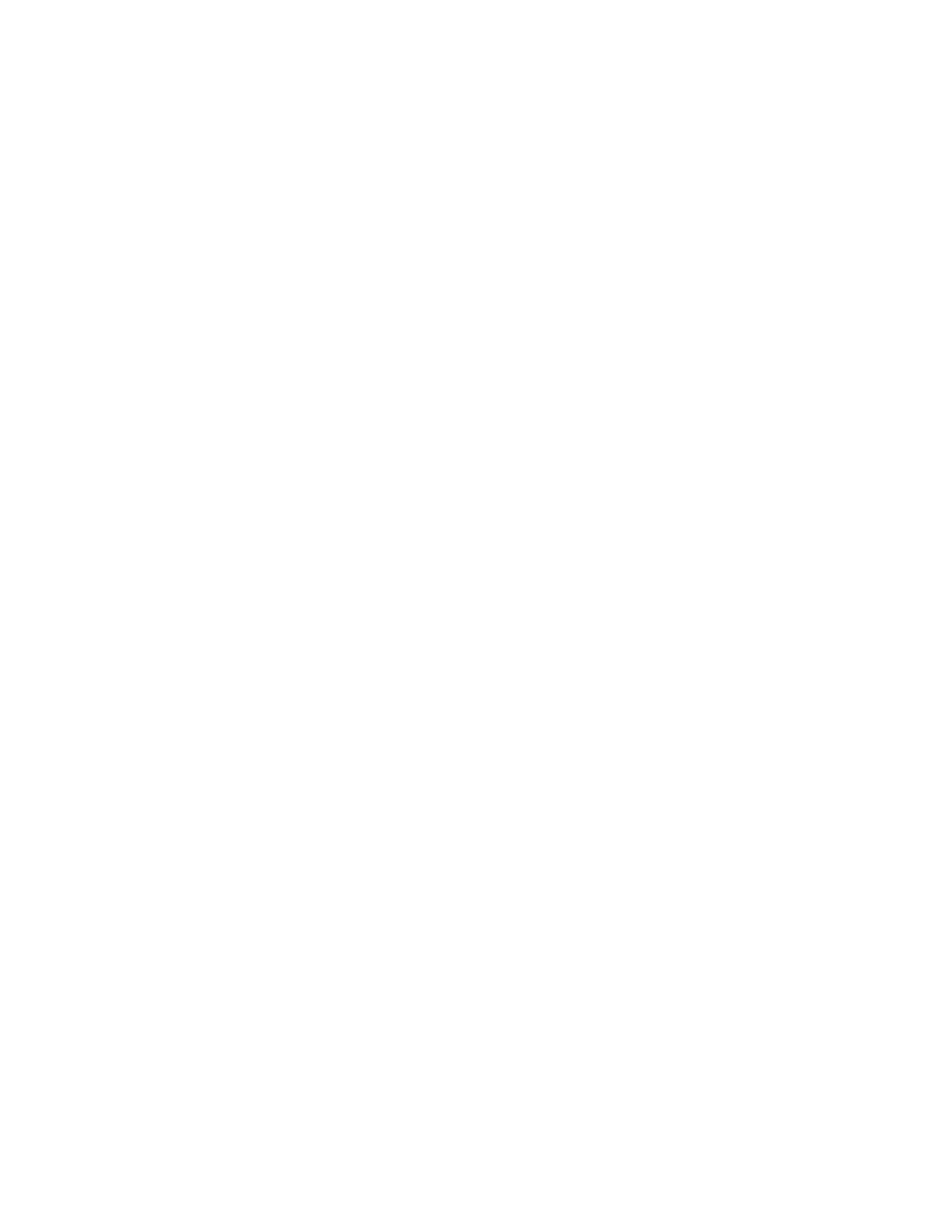}
    \includegraphics[width=.49\textwidth]{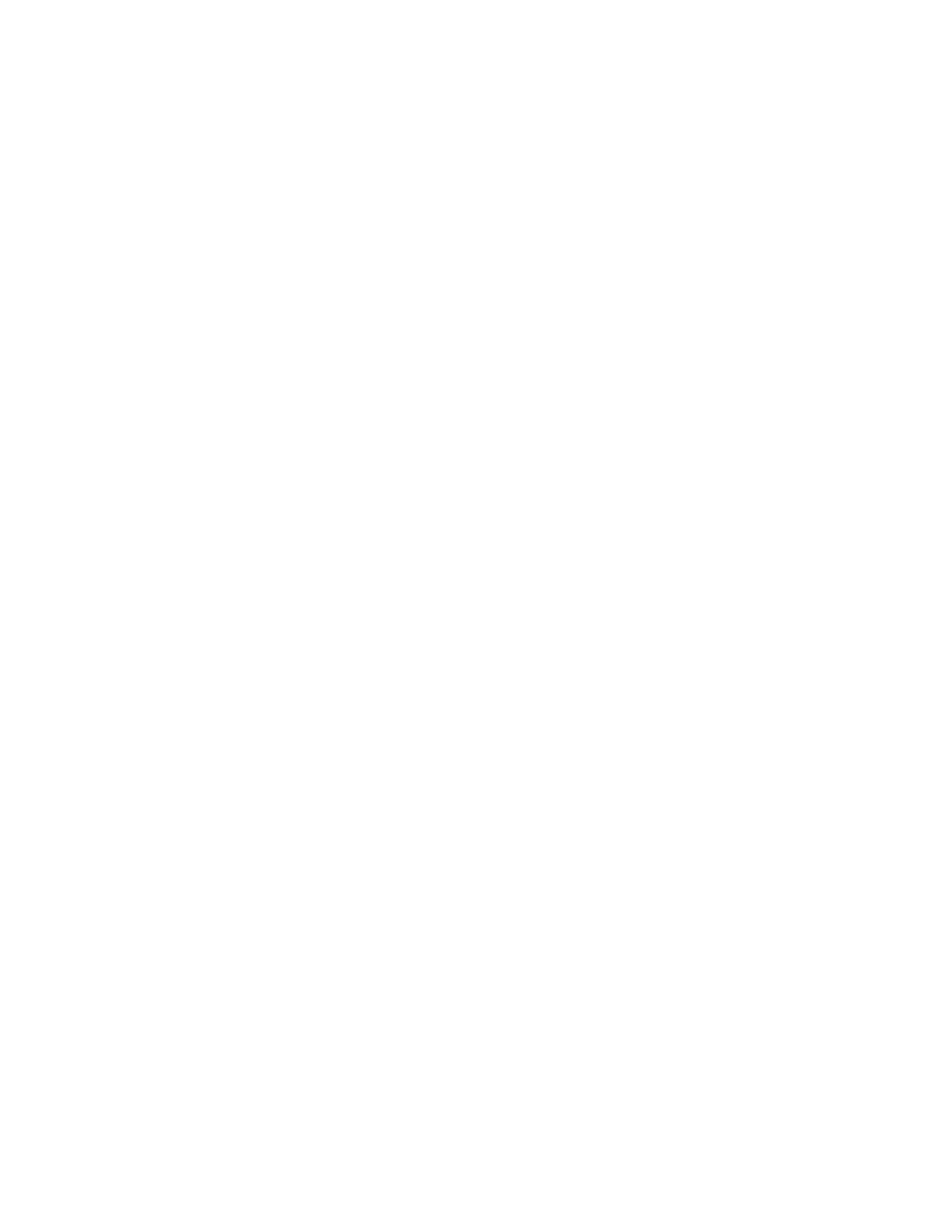}
  \end{center}
  \caption{\small Bifurcation branch of $2 \pi$-periodic traveling wave solutions of the 
   Whitham equation. In (a), the entire bifurcation branch is indicated with dots, 
   and compared to a curve described by the expressions given in \eqref{eq:muasym}
   and \eqref{eq:phiasym}. 
   In (b), a close-up of the turning point is shown.}
\label{fig:4}
\end{figure}
\begin{figure}[h]
  \begin{center}
    \includegraphics[width=.49\textwidth]{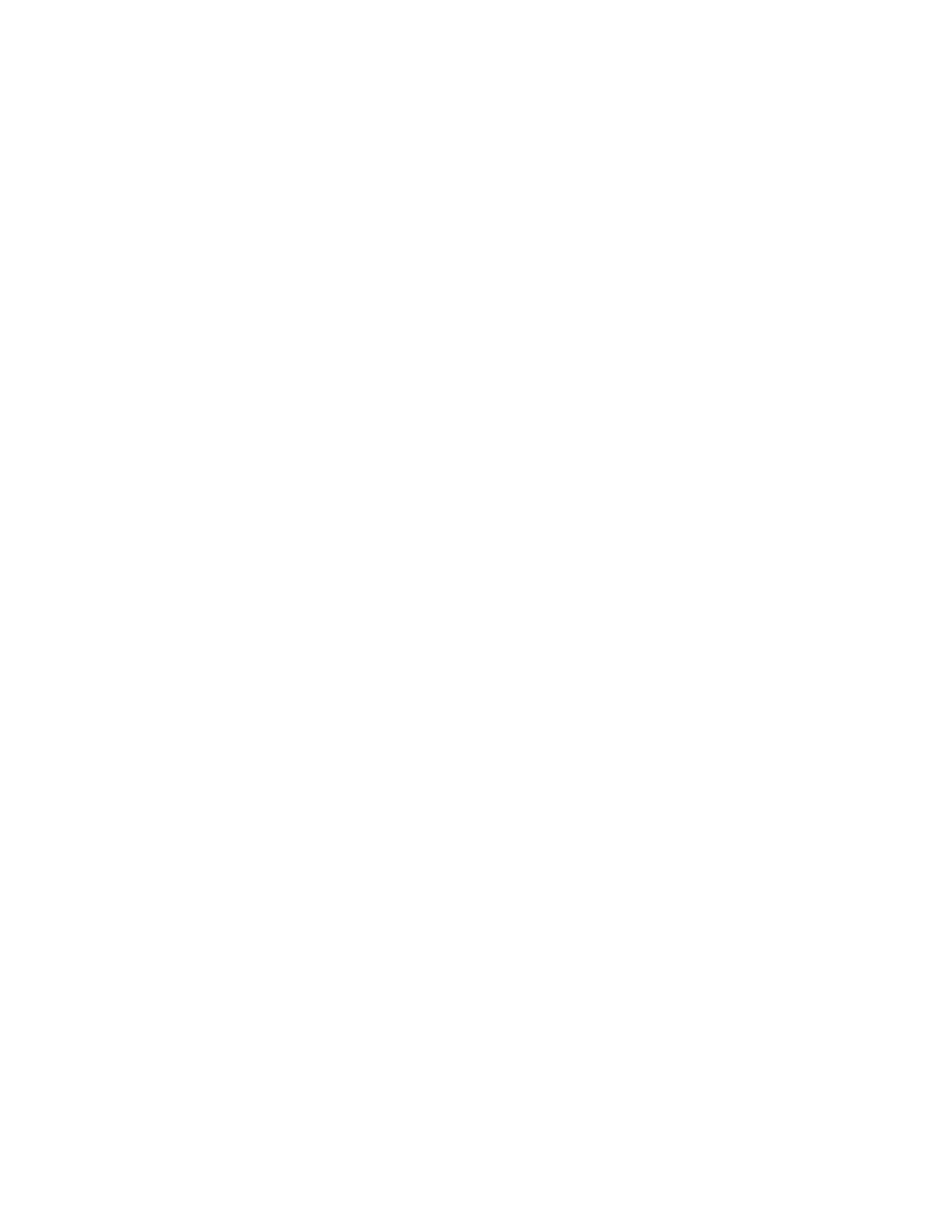}
    \includegraphics[width=.49\textwidth]{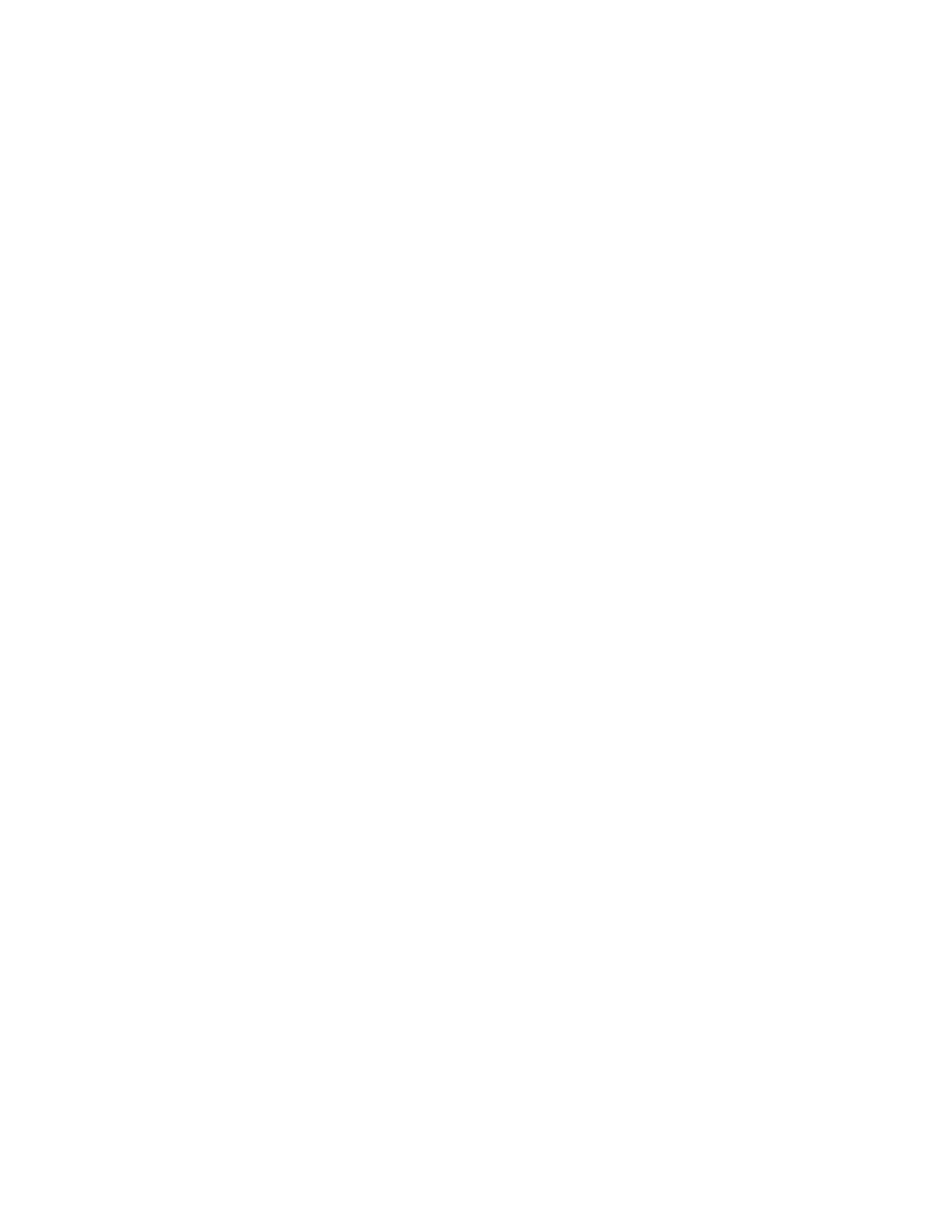}
  \end{center}
  \caption{\small
This figure presents computations using the dynamic discretization with
approximate Whitham traveling waves as initial data. The computations were
done with $N=256$ and time step $\delta t = 0.001$. The initial data  
$\phi_N$ is represented by a dashed line, and the computed profile
$\eta_N$ after one period is represented by a solid line.
In (a), a wave with wavespeed $c= 0.7664$ is shown,
In (b), a wave with wavespeed $c= 0.7667$ is shown.}
\label{fig:5}
\end{figure}
\begin{figure}[b]
  \begin{center}
    \includegraphics[width=.49\textwidth]{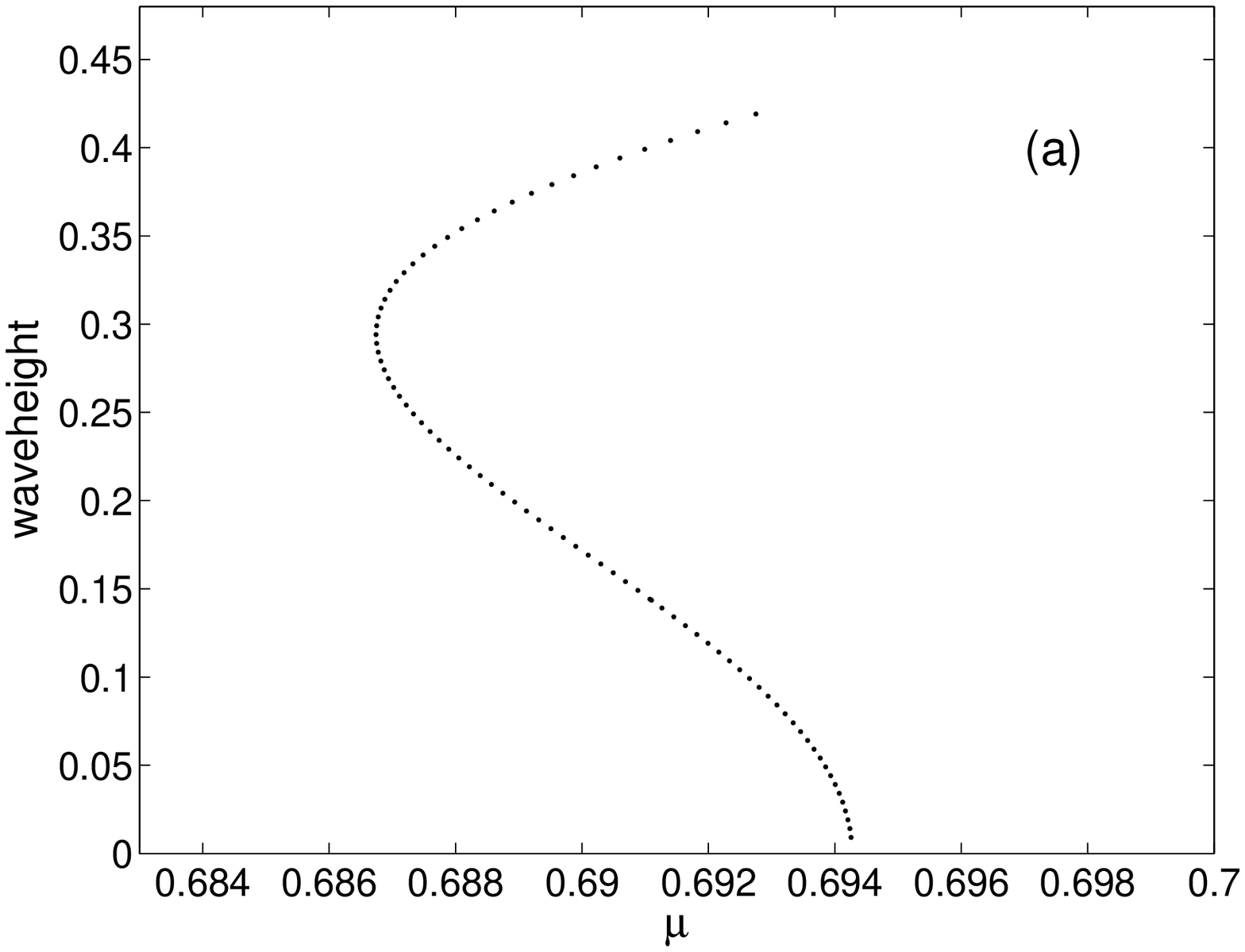}
    \includegraphics[width=.48\textwidth]{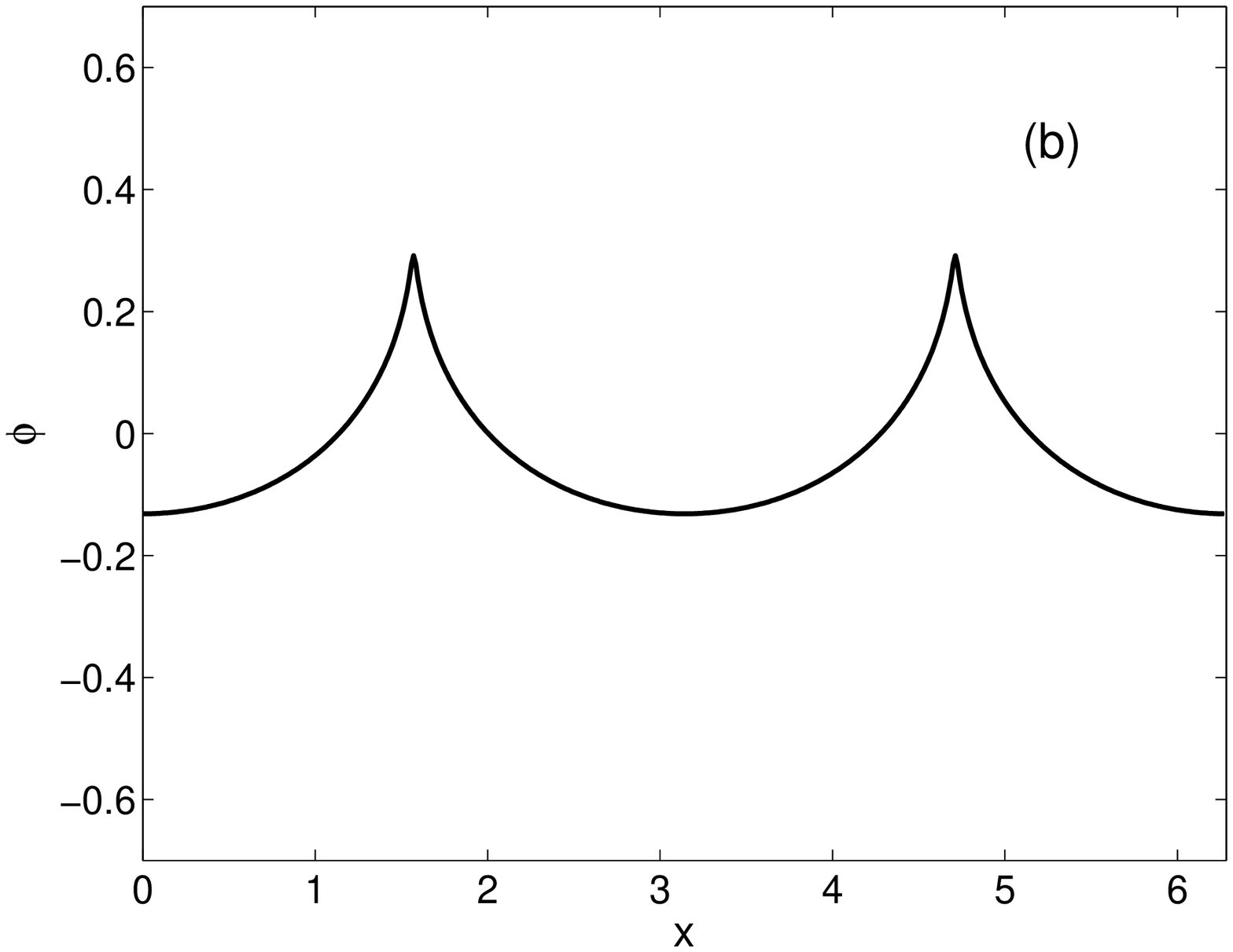}
  \end{center}
  \caption{\small A branch of approximate traveling waves for the Whitham equation with $k=2$
is shown in (a). The profile of the highest wave on $[0,2\pi]$ is shown in (b). 
}
\label{fig:6}
\end{figure}
In Figures \ref{fig:1} and \ref{fig:2}, 
traveling wave solutions of the Whitham equation were shown.
Figure \ref{fig:4} shows the bifurcation branch of these waves.
In Figure \ref{fig:4}(a), the whole branch is shown, and compared to the formula
given in \eqref{eq:muasym}. In Figure \ref{fig:4}(b) a close-up of the turning point
is shown.
Determining the endpoint of the branch is challenging. Here a combination of
two tests are used. First, if a point is suspected to be higher than the
highest branchpoint, then the solution is interpolated, and the Newton
scheme run on a finer grid. Failure of this scheme to converge indicates
that the point does not lie on the branch. 
The highest point in Figure \ref{fig:4}(b) corresponds 
to a wave for which the Newton scheme converged with more
than $1400$ gridpoints on the interval $[0,2 \pi]$.
The second test uses the 
dynamic integrator. 
For example, Figure \ref{fig:5}(a) shows the results of running
the wave corresponding to the highest point on the branch shown in Figure 3
through the dynamic integrator for one period. 
Figure \ref{fig:5}(b) shows the next point
(which is not shown in Figure \ref{fig:4}), but which can be computed with lower
values of $N$ (up to about $N=400$).
The difference is dramatic, and the time integration cannot be rescued
by choosing a smaller time step. Nevertheless, we do not expect the numerical
scheme to be able to approximate the wave corresponding to the very highest point on the branch.
Indeed, comparing the maximum of the highest wave shown in Figures \ref{fig:4}
and \ref{fig:6} 
to the statement of Theorem \ref{thm:convergence}, it appears that the branch
of H\"{o}lder continuous solutions should continue further than shown in 
those figures. On the other hand, our criterion for termination of the
numerical bifurcation included failure of the time-dependent scheme to converge,
and this failure can also be a result of instability of the computed traveling wave.
Moreover, if the highest wave does indeed feature a cusp, 
then the spectral scheme will fail to converge, and convergence for
waves close to the terminal wave will also be delicate.
A method for computing cusped waves was proposed in \cite{1027.65136},
and this might be a good approach for possilbe further studies.

In Figure \ref{fig:6}, a branch with $k=2$ is shown. To find the highest point on
this branch, similar methodology as for the principal branch was used. We note that
the turning point is much farther from the end of the branch than in the principal branch.

\subsection*{\sc Comparison with KdV}
%
\begin{figure}[b]
  \begin{center}
    \includegraphics[width=.49\textwidth]{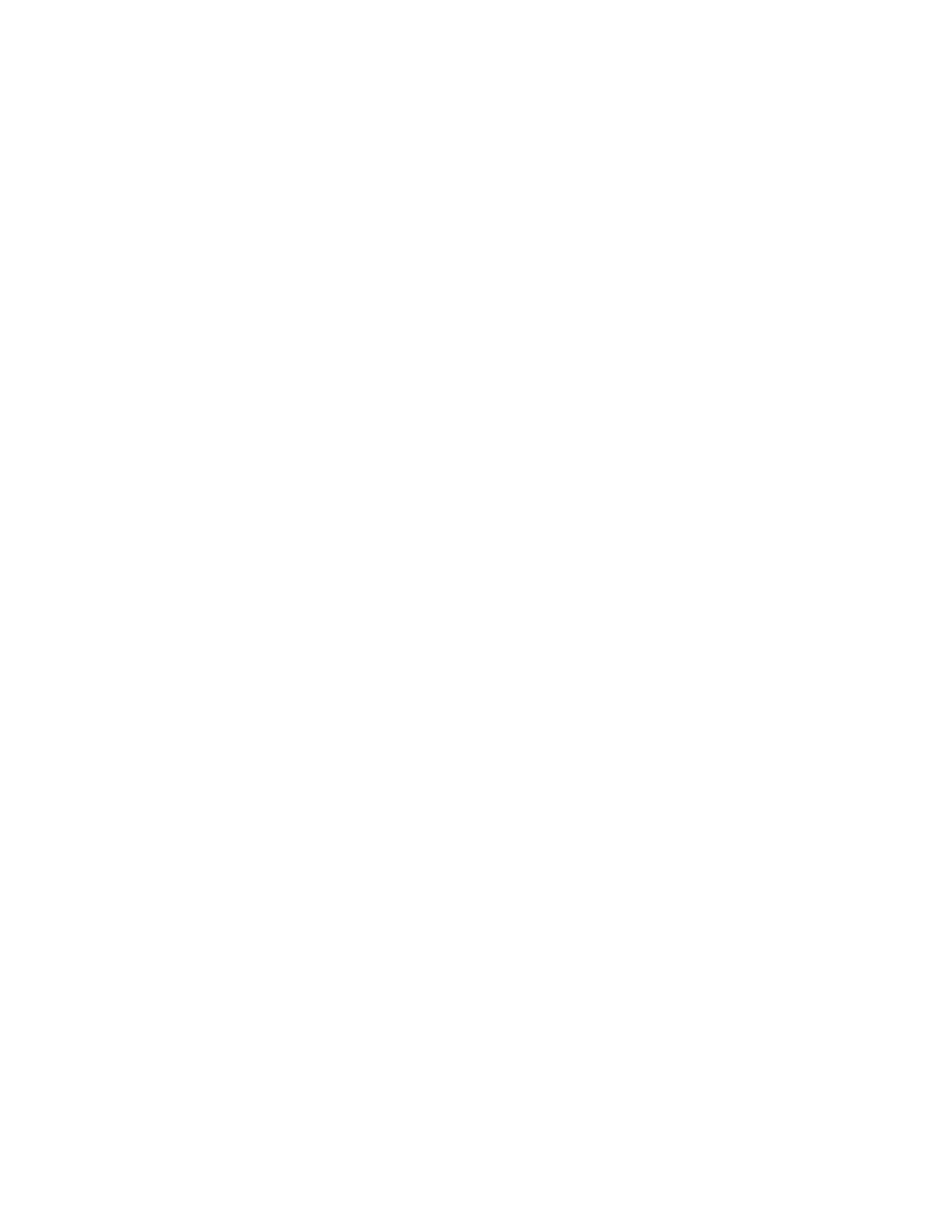}
    \includegraphics[width=.48\textwidth]{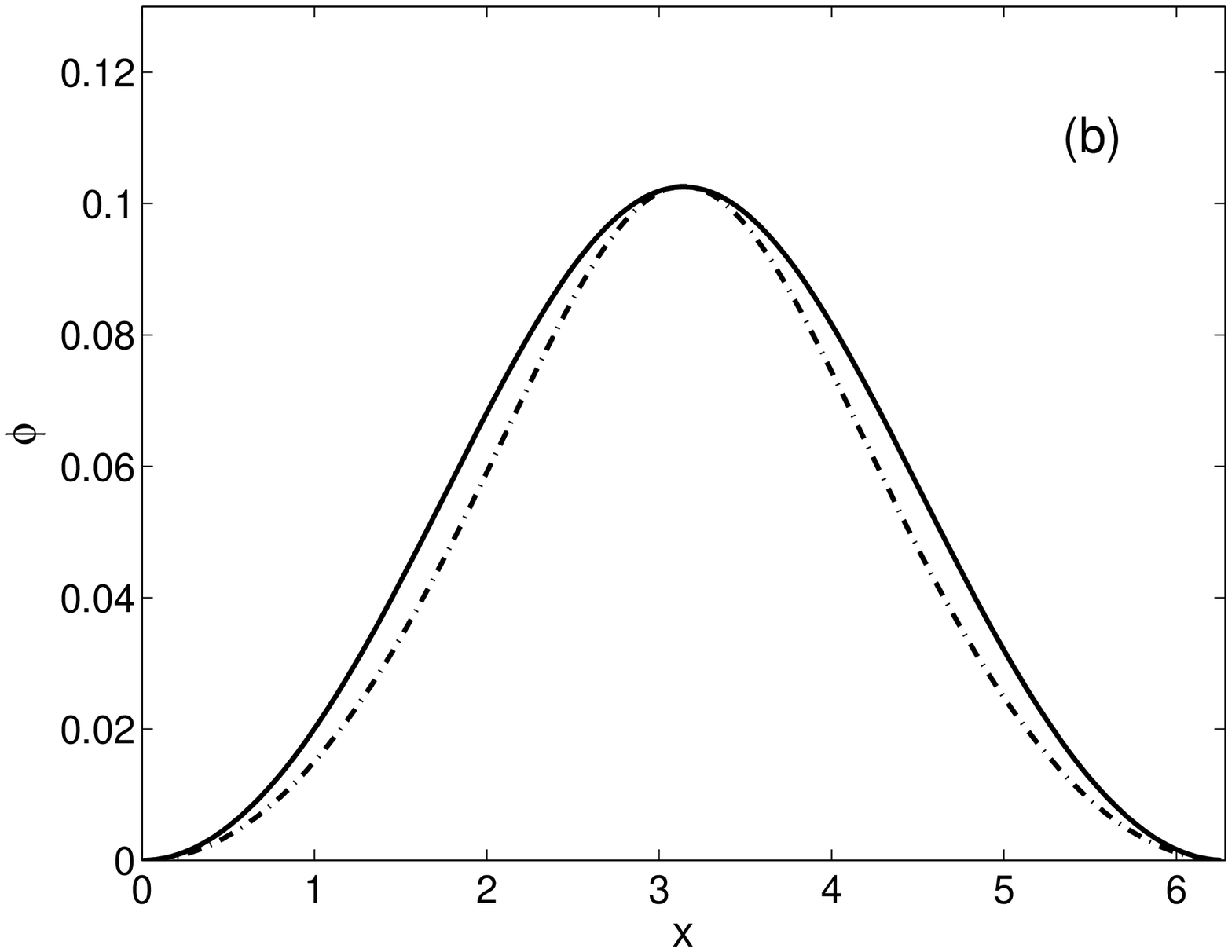}
    \includegraphics[width=.48\textwidth]{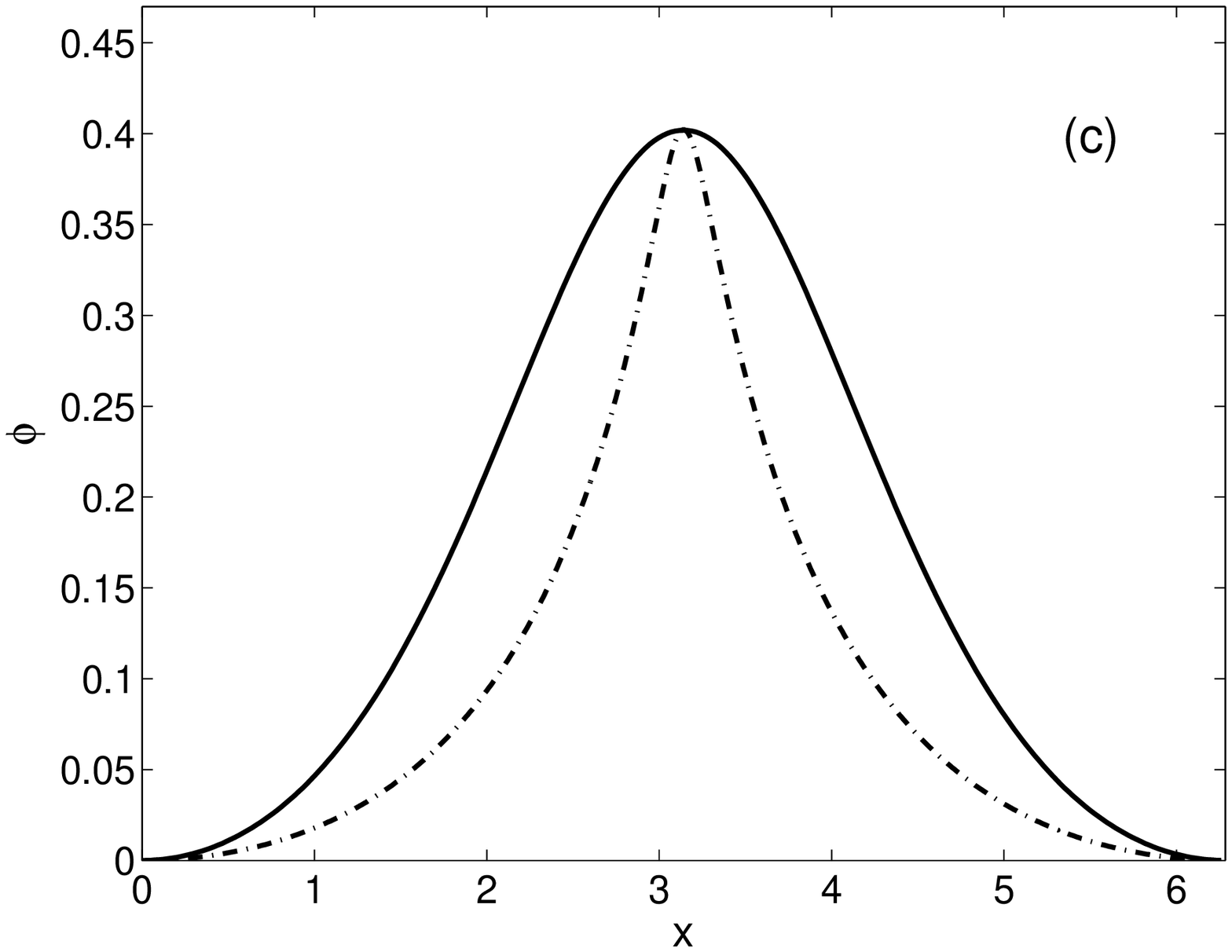}
    \includegraphics[width=.48\textwidth]{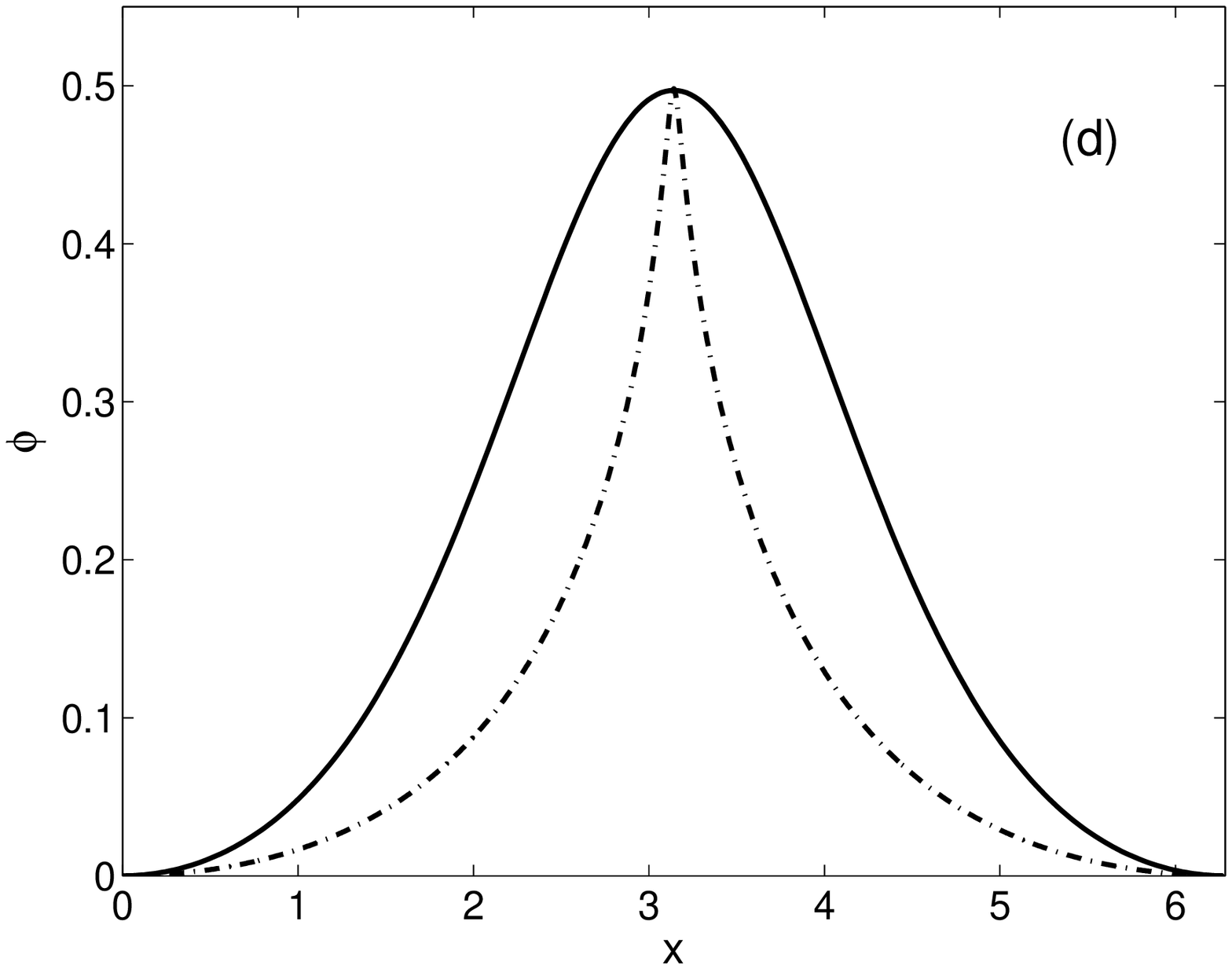}
  \end{center}
  \caption{\small A branch of approximate traveling waves for the Whitham equation with $k=1$
(the principal branch) and a branch of traveling waves for the KdV equation are shown in (a). 
Comparisons between the shapes of Whitham and KdV traveling waves are shown in (b), (c)
and (d). The waves are arranged so that the minimum coincides with the $x$-axis.
The Whitham waves are represented by dashed-dotted curves, and the KdV waves
are represented by solid curves.
The waveheight of the waves is $0.1$ in (b), $0.4$ in (c) and $0.485$ in (d). 
}
\label{fig:7}
\end{figure}
It is of interest to see how the change in the linear dispersion
relation affects the shape of the traveling waves.
Therefore, attention will now be focused on the comparison of traveling waves
of the Whitham equation to traveling-wave solutions of the KdV equation.

First the bifurcation branches are compared. For this purpose, the wavelength
is held fixed at $2 \pi$. 
It was noted in Section \ref{sec:local} that the bifurcation points
$\mu_1$ and $\mu^*$ are not the same. 
This difference is due to the small wavelength $2\pi$. A comparison
using a larger wavelength would yield a smaller difference in bifurcation points.
Figure \ref{fig:7}(a) shows bifurcation branches for both KdV (solid curve)
and Whitham (dotted curve) equations. 
As noted above, the Whitham branch appears to have a highest wave,
after which the approximation no longer converges.
On the other hand, the KdV branch is unlimited, and
after a Galilean shift, this branch will approach a solitary wave.

For the purposes of comparing the shape, we chose to fix the waveheight 
and wavelength of the traveling waves. Other normalizations
are of course possib;e, such as fixing the speed or some integral measure.
In Figure \ref{fig:7}(b), (c) and (d) Whitham and KdV traveling waves
with respective waveheights of $0.1$, $0.4$ and $0.485$ are shown. 
For a better comparison, the waves are plotted with the minimum at zero.
The small-amplitude waves are very similar in shape. However, for the
large-amplitude waves we note that the Whitham waves appear narrower, with
a more pronounced and sharp peak.

\section{Conclusion}\label{sec:conclusion}
In this article, a bifurcation theory has been developed to prove existence of a
global branch of traveling-waves solutions of the Whitham equations \eqref{eq:whit}.
Smoothness of the solutions lying on the bifurcation curve
has been proved, and it has been shown that the solutions converge uniformly to a solution of H\"{o}lder regularity $\alpha \in (0,1)$, expect possibly at the highest crest point (where $\alpha$ may be less than $1/2$). The larger H\"{o}lder norm blows up only if the highest point is attained. It was conjectured already by Whitham \cite{MR1699025} that this branch of
traveling wave has a limiting wave which has a cusp. Unfortunately such a result
has not been established, but remains an interesting open problem.
A comparative local bifurcation was provided for the Whitham and the KdV equations,
and numerical tools were used to exhibit properties of the principal branch ($k=1$)
and the secondary bifurcation branch ($k=2$) of the Whitham equation.
Finally, a comparison was made between the Whitham and KdV traveling waves which
revealed that large-amplitude Whitham waves are narrower than their KdV counterparts.
It remains to be seen which of these waves resembles the shape of a traveling water wave
(the solution of the free-surface boundary-value problem for the Euler equations) 
more closely.

\begin{acknowledgments}
The authors would like to thank Erik Wahl{\'e}n for pointing out an error in an earlier preprint. This research was supported in part by the Research Council of Norway. 
\end{acknowledgments}
%

\bibliographystyle{siam}
\bibliography{whitham}

\end{document}